\documentclass[oneside]{amsart}
\usepackage{amssymb}
\usepackage[foot]{amsaddr}
\usepackage{stmaryrd} 
\usepackage{amsmath} 
\usepackage{amscd}
\usepackage{cancel}
\usepackage[toc]{appendix}
\usepackage{amsthm}
\usepackage{amsbsy}
\usepackage{mathtools}
\usepackage{tablefootnote}
\usepackage{hyperref}
\usepackage{multirow}
\usepackage[normalem]{ulem}
\useunder{\uline}{\ul}{}

\usepackage{commath, longtable}
\usepackage{comment, enumerate}
\usepackage[marginparwidth=1in]{geometry}
\usepackage[matrix,arrow]{xy}
\usepackage{mathrsfs}
\usepackage{color}
\usepackage{float}
\usepackage{mathtools,caption}
\usepackage[table,dvipsnames,svgnames]{xcolor}
\usepackage{tikz-cd}
\usetikzlibrary{positioning}
\usepackage{longtable}
\usepackage{scalerel}
\usepackage[utf8]{inputenc}
\usepackage[OT2,T1]{fontenc}
\usepackage[normalem]{ulem}
\usepackage[customcolors]{hf-tikz}
\usepackage{enumitem}
\newlist{primenumerate}{enumerate}{1}
\setlist[primenumerate,1]{label={\arabic*$'$}}
\hypersetup{
 colorlinks=true,
 linkcolor=DarkOrchid,
 filecolor=blue,
 citecolor=olive,
 urlcolor=orange,
 pdftitle={HK p-TSNC},
 }
\usepackage{booktabs}
\usepackage{todonotes}

\DeclareSymbolFont{cyrletters}{OT2}{wncyr}{m}{n}
\DeclareMathSymbol{\Sha}{\mathalpha}{cyrletters}{"58}

\newtheorem{theorem}{Theorem}[section]
\newtheorem{lemma}[theorem]{Lemma}

\newtheorem{proposition}[theorem]{Proposition}
\newtheorem{corollary}[theorem]{Corollary}
\newtheorem{definition}[theorem]{Definition}

\numberwithin{equation}{section}
\newtheorem{lthm}{Theorem}

\theoremstyle{remark}
\newtheorem{remark}[theorem]{Remark}

\newcommand{\rhobar}{\bar{\rho}}

\newcommand{\GL}{\operatorname{GL}}
\newcommand{\SL}{\operatorname{SL}}
\newcommand{\PSL}{\operatorname{PSL}}
\newcommand{\PGL}{\operatorname{PGL}}

\newcommand{\F}{\mathbb{F}}
\newcommand{\Q}{\mathbb{Q}}
\newcommand{\C}{\mathcal{C}}

\newcommand{\Frob}{\operatorname{Frob}}

\newcommand{\tr}{\operatorname{tr}}
\newcommand{\Inn}{\mathrm{Inn}}
\newcommand{\Aut}{\mathrm{Aut}}
\newcommand{\GQ}{G_{\Q}}
\newcommand{\dm}[1]{\begin{psmallmatrix}#1\end{psmallmatrix}}

\makeatletter
\newcommand{\mylabel}[2]{#2\def\@currentlabel{#2}\label{#1}}
\makeatother

\title[Distinguishing elliptic curves mod $p$]{Distinguishing elliptic curves modulo $p$ and identifying images of product representations}

\author[J.~Bennett]{Jessica Bennett}
\address[Bennett]{
University of California, Irvine\\
Department of Mathematics\\
340 Rowland Hall\\
Irvine, CA 92697-3875 USA\\
bennett3@uci.edu
}

\author[J.~Hatley]{Jeffrey Hatley}
\address[Hatley]{
Department of Mathematics\\
Union College\\
Bailey Hall 202\\
Schenectady, NY 12308 USA\\ 
hatleyj@union.edu}

\author[S.~Kononova]{Sasha Kononova}
\address[Kononova]{University of California, Irvine\\
Department of Mathematics\\
340 Rowland Hall\\
Irvine, CA 92697-3875 USA\\
skononov@uci.edu}

\author[V.~Macri]{Vincent Macri}
\address[Macri]{
Department of Mathematics and Statistics\\
University of Calgary\\
Mathematical Sciences 476
Calgary, AB T2N 1N4 Canada\\
vincent.macri@ucalgary.ca
}

\author[Z.~Porat]{Zachary Porat}
\address[Porat]{
Department of Mathematics and Statistics\\
Bucknell University\\
380 Olin Science Building\\
Lewisburg, PA 17837 USA\\
z.porat@bucknell.edu}

\author[N.~Praveen]{Naina Praveen}
\address[Praveen]{
Department of Mathematics\\
University College London\\
Gower Street, London WC1E 6BT, UK\\
naina.praveen.24@ucl.ac.uk
}

\keywords{number theory, elliptic curves, Galois representations, division fields}
\subjclass[2020]{Primary 14H52, 11F80}

\begin{document}

\begin{abstract} Given two elliptic curves defined over $\Q$ and a rational prime $p$, we study the product of their residual Galois representations. Using Goursat's lemma, we explicitly enumerate and completely characterize all possible images of such product representations. We also define associated invariants to these image groups, which we call \textit{witness ratios}, and we explain their computational utility and their relationship to the well-known Sturm bound for testing congruences between modular forms.
\end{abstract}

\maketitle

\section{Introduction} 

Two elliptic curves $E_1$ and $E_2$ defined over $\Q$ are said to be \textit{congruent modulo} a rational prime $p$ if there is a $\mathrm{Gal}(\bar{\Q}/\Q)$-module isomorphism $E_1[p] \simeq E_2[p]$ between their $p$-torsion subgroups. Congruences between elliptic curves have played an important role in many areas of research, such as the proof of the Modularity Conjecture \cite{Wil95, TW95,breuil2001modularity} and in studying the Iwasawa Main Conjecture \cite{greenbergvatsal,epw}. It has also been shown that congruences between elliptic curves allow for the transfer of arithmetic information \cite{shekhar-parity, hatleyparity}.

Elliptic curves between which there exists an isogeny of degree prime-to-$p$ are always congruent. For small primes $p$, congruences between non-isogenous curves can be found in abundance (e.g. \cite{rubin-silverberg}), while they become harder to find as $p$ gets larger (see e.g. \cite{fisher1}). In fact, the Frey-Mazur conjecture posits that for $p \geq 19$, congruences should only exist between isogenous elliptic curves. This conjecture has been verified for every pair of elliptic curves of conductor at most $500\,000$ \cite{cremona-freitas}.

In practice, given a pair of elliptic curves, one can effectively determine whether they are congruent using the following two facts. First, if $E_1[p] \simeq E_2[p]$, then for every prime $\ell \neq p$ of good reduction for both curves,
we must have $a_\ell(E_1) \equiv a_\ell(E_2) \mod p$, where $a_\ell(E)$ denotes the usual trace of Frobenius. Second, thanks to the Sturm bound (\cite[Corollary 9.20]{stein-book}), it suffices to check $\ell$ such that 
\begin{equation}\label{eq:sturm-bound}
\ell \leq
\frac{1}{6}N \prod_{q \mid N} \left(1 + \frac{1}{q}\right),
\end{equation}
where $N$ is the least common multiple of the conductors of the curves, and $q$ ranges over all primes dividing $N$. In particular, this bound grows like $O(N)$. It is worth noting that, in practice, if $E_1$ and $E_2$ are \textit{not} congruent, our computational experiments suggests that one rarely needs to go very far before finding a prime which witnesses this fact, i.e. a prime $\ell$ for which $a_\ell(E_1) \not\equiv a_\ell(E_2) \mod p.$ 

The purpose of this paper is to study the problem of distinguishing elliptic curves modulo $p$ from a slightly different perspective. In the remainder of this introduction, we summarize our approach and give some sample results.

Suppose that $E_1$ and $E_2$ are elliptic curves defined over $\Q$, and fix an odd prime $p$. Write $N_1$ and $N_2$ for their conductors, and for $i \in \{1,2\}$ denote by 
\[
\rhobar_{E_i,p} \colon \mathrm{Gal}(\bar{\Q}/\Q) \to \mathrm{GL}_2(\F_p)
\]
their associated mod $p$ Galois representations (see Section \ref{sec:background} for more background). We assume that these representations are surjective (and hence irreducible). Then $E_1$ and $E_2$ are congruent mod $p$ precisely when $\rhobar_{E_1,p}  \simeq \rhobar_{E_2,p} $. We recall that two such representations are isomorphic provided there exists an $M \in \mathrm{GL}_2(\F_p)$ such that
\[
\rhobar_{E_1,p}(\sigma) = M \rhobar_{E_2,p}(\sigma)M^{-1} \quad \text{for all } \sigma \in \mathrm{Gal}(\bar{\Q}/\Q).
\]
The Frobenius elements $\Frob_\ell$ associated to the good primes $\ell \nmid pN_1 N_2$ are dense in $\mathrm{Gal}(\bar{\Q}/{\Q})$ and satisfy $a_\ell(E_i) \equiv \tr(\rhobar_{E_i,p} (\Frob_\ell)) \mod p$. It follows that if $\rhobar_{E_1,p}  \not\simeq \rhobar_{E_2,p} $, then there exist primes $\ell$ such that $\tr(\rhobar_{E_1,p} (\Frob_\ell)) \not\equiv \tr(\rhobar_{E_2,p} (\Frob_\ell)) \mod p$.

Our strategy is to study the product representation
\[
\rhobar = \rhobar_{E_1,p}  \times \rhobar_{E_2,p}  \colon \mathrm{Gal}(\bar{\Q}/\Q) \to \mathrm{GL}_2(\F_p) \times \mathrm{GL}_2(\F_p)
\]
defined by $\sigma \mapsto (\rhobar_{E_1,p} (\sigma),\rhobar_{E_2,p} (\sigma))$ for all $\sigma \in \mathrm{Gal}(\bar{\Q}/\Q).$ On the one hand, if $\rhobar_{E_1,p}  \not\simeq \rhobar_{E_2,p} $, then there must exist $(g,h) \in \mathrm{Im}(\rhobar)$ such that $\tr(g) \not\equiv \tr(h) \mod p$; that is, $\mathrm{Im}(\rhobar)$ must not be conjugate to the diagonal subgroup.

On the other hand, the possible subgroups which can actually occur as the image of $\rhobar$ are greatly restricted; see Section \ref{subsec:approach}. When $p=3$ there are four non-diagonal possibilities, and when $p \geq 5$, there are three non-diagonal possibilities. 

Consider a subgroup $H \subset \mathrm{GL}_2(\F_p) \times \mathrm{GL}_2(\F_p)$, and partition it into conjugacy classes $C_1, \ldots, C_r$. Call a conjugacy class \textit{trace-different} if it contains elements $(g,h)$ such that $\tr(g)\not\equiv \tr(h) \mod p$; since trace is conjugacy-invariant, this is a well-defined notion which may be detected by sampling any representative element of the conjugacy class.

Let $\mathcal{S}$ be the set of trace-different conjugacy classes in $H$, and define
\begin{equation}\label{eq:WH-defn}
W(H)=\frac{1}{|H|} \sum_{C \in \mathcal{S}} |C|.
\end{equation}
By the Chebotarev Density Theorem, $W(H)$ may be interpreted as the asymptotic density of primes $\ell$ for which $a_\ell(E_1) \not\equiv a_\ell(E_2) \mod p$. We thus obtain theorems like the following; for a more complete description of the subgroups occurring, see sections \ref{sec:general-p} and \ref{sec:main-results}.

\begin{lthm}\label{thm:intro-sample-thm}
Suppose $E_1$ and $E_2$ are elliptic curves defined over $\Q$. Suppose further that $\rhobar_{E_1,3}$ and $\rhobar_{E_2,3}$ are surjective. If $\rhobar_{E_1,3} \not\simeq \rhobar_{E_2,3}$, then the image $H$ of the product representation is conjugate to one of four explicit groups, which we call $\Gamma_\chi, H_\pm, H_Q,$ and $\Delta$. Furthermore, we compute
\[
W(\bullet)=\begin{cases} 
    \frac{1}{4}, & \bullet = \Gamma_\chi \\ 
    \frac{5}{16}, & \bullet = H_\pm \\
    \frac{35}{64}, & \bullet = H_Q \\
    \frac{41}{64}, & \bullet=\Delta \\
    \end{cases}.
\]
As a consequence, we have
\[
\lim_{X \to \infty} \frac{\#\{\textrm{good primes } \ell < X \ |  \ a_\ell(E_1) \not\equiv a_\ell(E_2) \mod 3 \}}{\#\{\textrm{good primes } \ell < X\}} = W(H).
\]
\end{lthm}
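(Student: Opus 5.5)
The plan is to classify all subgroups $H \subseteq \GL_2(\F_3)\times\GL_2(\F_3)$ that can occur as $\mathrm{Im}(\rhobar)$ using Goursat's lemma, and then compute $W(H)$ for each by direct counting.

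\textbf{Constraints on $H$.} By hypothesis both projections of $H$ are surjective onto $\GL_2(\F_3)$. Since $\det\rhobar_{E_i,3}$ is the mod $3$ cyclotomic character for both $i$, we have
\[
H \subseteq \{(g,h) : \det g = \det h\}.
\]

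\textbf{Goursat's lemma.} $H$ corresponds to a triple $(N_1,N_2,\phi)$, where $N_1,N_2 \trianglelefteq \GL_2(\F_3)$ and $\phi\colon \GL_2(\F_3)/N_1 \xrightarrow{\sim} \GL_2(\F_3)/N_2$, with $H=\{(g,h): \phi(gN_1)=hN_2\}$. The normal subgroups of $\GL_2(\F_3)$ are
\[
1,\ \{\pm1\},\ Q_8,\ \SL_2(\F_3),\ \GL_2(\F_3),
\]
with quotients $\GL_2(\F_3)$, $S_4$, $S_3$, $C_2$, $1$. These are pairwise non-isomorphic, so $N_1=N_2=:N$ and $\phi\in\Aut(\GL_2(\F_3)/N)$. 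The determinant condition forces $N\subseteq \SL_2(\F_3)$. We may conjugate $H$ by $(1,M)$, which changes $\phi$ by an inner automorphism, so only the classes of $\phi$ in $\mathrm{Out}$ matter. This gives the following cases.
\begin{itemize}
\item $N=\SL_2(\F_3)$: $H$ is the full fiber product over $\det$. This is $\Delta$, of order $48^2/2$.
\item $N=Q_8$: $\mathrm{Out}(S_3)=1$, giving one group $H_Q$.
\item $N=\{\pm1\}$: $\mathrm{Out}(S_4)=1$, giving one group $H_\pm$.
\item $N=1$: $\mathrm{Out}(\GL_2(\F_3))\cong C_2$. The trivial class gives the diagonal, which is excluded because $\rhobar_{E_1,3}\not\simeq\rhobar_{E_2,3}$. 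The nontrivial class is represented by $g\mapsto \det(g)\,g$, which preserves $\det$ because we are in dimension $2$. This gives $\Gamma_\chi=\{(g,\chi(g)g)\}$ with $\chi=\det$, i.e. a quadratic twist.
\end{itemize}
One must check that each automorphism used really is compatible with $\det$. I would verify this, and the computation $\mathrm{Out}(\GL_2(\F_3))=C_2$, with a short argument or a Magma/Sage check.

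\textbf{Computing $W$.} For $\Gamma_\chi$, the pair $(g,\det(g)g)$ is trace-different exactly when $\det g=-1$ and $\tr g\neq 0$. Of the $24$ elements of determinant $-1$, the $12$ of order $8$ have trace $\pm1$, while the involutions and the elements of order $6$ with $\det=-1$ have trace $0$. Hence $W(\Gamma_\chi)=12/48=1/4$. For $\Delta$, $H_\pm$ and $H_Q$, I would write $H$ as pairs $(g,h)$ with $h\in \phi(gN)$, i.e. $h$ ranges over a coset of $N$ determined by $g$. The count is then a sum over $g$, organized by conjugacy class of $g$, of the number of $n\in N$ with $\tr(\tilde g n)\neq\tr g$. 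This needs the trace distribution on cosets of $\{\pm1\}$, $Q_8$ and $\SL_2(\F_3)$ in $\GL_2(\F_3)$, which is a finite table. It should produce $5/16$, $35/64$ and $41/64$. Since trace-difference is conjugation-invariant, summing over elements rather than over classes gives the same $W$.

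\textbf{Density statement.} This follows from the Chebotarev density theorem applied to the finite Galois extension cut out by $\ker\rhobar$. For $\ell\nmid 3N_1N_2$, the element $\rhobar(\Frob_\ell)$ lies in a well-defined conjugacy class of $H$, and $a_\ell(E_i)\equiv\tr\rhobar_{E_i,3}(\Frob_\ell)$. So the density of good $\ell$ with $a_\ell(E_1)\not\equiv a_\ell(E_2)\pmod 3$ equals the proportion of trace-different elements of $H$, which is $W(H)$.

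\textbf{Main obstacle.} I expect the hardest part to be the Goursat bookkeeping: correctly identifying the outer automorphism groups, and ensuring that no other $N_1\neq N_2$ configurations arise, for example via exotic isomorphisms of quotients. The explicit trace counts for $H_\pm$ and $H_Q$ are tedious but finite, and I would confirm them by computer.
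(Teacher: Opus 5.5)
Your proposal is correct and has the same overall architecture as the paper's argument: Goursat's lemma, a finite trace count in each admissible group, then Chebotarev. The real difference is in the classification step.

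The paper obtains the list of admissible groups at $p=3$ by computer enumeration. It adds only the remark that $G/Q_8\simeq S_3$ and $\Aut(S_3)=\Inn(S_3)$, because its $p\ge 5$ lemmas (simplicity of $\PSL_2(\F_p)$, perfectness of $\SL_2(\F_p)$) fail at $p=3$. You instead do the $p=3$ Goursat analysis by hand. The five normal subgroups have quotients of distinct orders, which forces $N_1=N_2$. Then $\mathrm{Out}(S_3)=\mathrm{Out}(S_4)=1$ and $\mathrm{Out}(\GL_2(\F_3))\cong C_2$ finish the argument. This gives a computer-free proof that the list is complete.

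The two points you flag are easy to close.
\begin{itemize}
\item Determinant-compatibility is automatic. For $N\subseteq\SL_2(\F_3)=[G,G]$, the quotient $G/N$ has abelianization $C_2$. So the map induced by $\det$ is its unique surjection onto $\F_3^\times$, and every automorphism of $G/N$ commutes with it.
\item The claim $\mathrm{Out}(\GL_2(\F_3))\cong C_2$ follows as in Lemma~\ref{lem:det-preserving}, since $C_G(\SL_2(\F_3))=Z$ and $\Aut(\SL_2(\F_3))\cong S_4\cong\PGL_2(\F_3)$ still hold at $p=3$. The only change is that non-innerness of $g\mapsto\det(g)g$ must be witnessed by an element of order $8$ (determinant $-1$, trace $\pm1$). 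The paper's witness $\mathrm{diag}(\epsilon,1)$ with $\epsilon\neq -1$ does not exist when $p=3$.
\end{itemize}

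There is one small slip: no element of order $6$ has determinant $-1$. The twelve trace-zero elements of determinant $-1$ are all involutions. Your value $1/4$ is unaffected.

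Your coset-table method also yields $W(H_Q)$ by hand. The coset $Q_8$ has traces $-1,1,0$ with multiplicities $1,1,6$. The other two determinant-$1$ cosets have multiplicities $4,4,0$. The three determinant-$(-1)$ cosets have multiplicities $2,2,4$. So there are $384-(38+2\cdot 32+3\cdot 24)=210$ trace-different pairs, and $W(H_Q)=210/384=35/64$.
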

This is proven as Theorem~\ref{thm:main-p-3}.

\begin{remark} In practice, this allows one to determine the image of the product representation for a concrete pair of elliptic curves. As an example, consider the elliptic curves with Cremona labels $E_1=\href{https://www.lmfdb.org/EllipticCurve/Q/11/a/2}{\texttt{11a1}}$ and $E_2=\href{https://www.lmfdb.org/EllipticCurve/Q/52/a/2}{\texttt{52a1}}$. Using the scripts found in \cite{code}, one finds that among the first $100\,000$ good primes, $64\,073$ of them witness the noncongruence of these curves modulo $3$. We have
\[
\frac{64\,073}{100\,000} \approx 0.64073.
\]
Comparing that with
\[
W(\Delta)=\frac{41}{64}=0.640625,
\]
one may reasonably guess that the image of the product representation is conjugate to $\Delta.$ In fact, this can be proven; see Section~\ref{subsec:realizations}.
\end{remark}

From Theorem~\ref{thm:intro-sample-thm}, one sees that in the worst case, a set of primes of density only $\frac{1}{4}$ witnesses the noncongruence modulo $3$. Consider the logarithmic integral function
\[
\mathrm{Li}(x)=\int_2^x \frac{dt}{\ln t}.
\]
Then an effective version of the Chebotarev Density Theorem (see e.g. \cite{DKN}) yields the following. 


\begin{corollary}
Retain the hypotheses of Theorem~\ref{thm:intro-sample-thm}. Then the number of noncongruence witnesses less than $X$ is asymptotic to
\begin{equation}\label{eq:chebotarev-1-4}
\left(\frac{1}{4}+o(1)\right)\mathrm{Li}(X).
\end{equation}
\end{corollary}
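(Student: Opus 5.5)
The plan is to apply the Chebotarev Density Theorem to the compositum of the two $3$-division fields, and then read off the density from Theorem~\ref{thm:intro-sample-thm}. The corollary is stated in the worst case of that theorem, so I read it as concerning the case $H=\Gamma_\chi$. I will first prove the general asymptotic $(W(H)+o(1))\mathrm{Li}(X)$ and then specialize.

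Let $K=\Q(E_1[3],E_2[3])$ be the fixed field of $\ker \rhobar$, where $\rhobar=\rhobar_{E_1,3}\times\rhobar_{E_2,3}$. Then $K/\Q$ is a finite Galois extension, and $\rhobar$ induces an isomorphism $\mathrm{Gal}(K/\Q)\simeq H$, with $H$ one of the four groups of Theorem~\ref{thm:intro-sample-thm}. The extension $K/\Q$ is unramified outside $3N_1N_2$. For a good prime $\ell\nmid 3N_1N_2$ we have $a_\ell(E_i)\equiv \tr(\rhobar_{E_i,3}(\Frob_\ell)) \bmod 3$. Hence $\ell$ is a noncongruence witness exactly when the Frobenius conjugacy class of $\ell$ in $H$ lies in the set $\mathcal{S}$ of trace-different classes. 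Since trace is a class function, this condition is well defined, and $\mathcal{S}$ is a union of conjugacy classes of total size $W(H)|H|$ by \eqref{eq:WH-defn}.

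Next I apply the effective Chebotarev theorem of \cite{DKN} to each class $C\in\mathcal{S}$ and sum over the classes. This gives
\[
\#\{\ell<X \text{ good}: a_\ell(E_1)\not\equiv a_\ell(E_2) \bmod 3\}=\frac{1}{|H|}\sum_{C\in\mathcal{S}}|C|\,\mathrm{Li}(X)+E(X)+O(1),
\]
where the $O(1)$ term accounts for the finitely many bad primes and for $\ell=3$. The unconditional error term has the shape $E(X)=O\big(X\exp(-c\sqrt{\log X/n_K})\big)$ once $X$ is large relative to $\mathrm{disc}(K)$. Here $n_K=|H|$ is fixed, at most $|\GL_2(\F_3)|^2$, and the discriminant is fixed for the given curves. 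Therefore $E(X)=o(\mathrm{Li}(X))$, and the count equals $(W(H)+o(1))\mathrm{Li}(X)$.

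Finally I specialize using the computation in Theorem~\ref{thm:intro-sample-thm}. In the worst case $H=\Gamma_\chi$ we have $W(\Gamma_\chi)=\tfrac14$, which gives exactly \eqref{eq:chebotarev-1-4}. In the other three cases the constant is larger ($5/16$, $35/64$, or $41/64$), so $\tfrac14$ is the minimum density over all admissible images.

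The only delicate point is the error term. One has to check that the version of effective Chebotarev quoted from \cite{DKN} gives $o(\mathrm{Li}(X))$ for a fixed field without assuming GRH. This holds because $K$ does not vary with $X$, so any exceptional-zero contribution is absorbed into the $o(1)$. Apart from that, everything reduces to the identification $\mathrm{Gal}(K/\Q)\simeq H$ and the density $W(H)$ already computed.
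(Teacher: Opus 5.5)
Your proposal is correct and follows the paper's approach. The paper simply invokes (effective) Chebotarev for the compositum $\Q(E_1[3],E_2[3])$, whose Galois group is $H$, so that witnesses have density $W(H)$, and then specializes to the worst case $W(\Gamma_\chi)=\tfrac14$ from Theorem~\ref{thm:intro-sample-thm}. Your reading of the statement as that worst case is the intended one: in general the count is $(W(H)+o(1))\mathrm{Li}(X)$ with $W(H)\geq\tfrac14$. Since $K$ is fixed, the ordinary Chebotarev density theorem together with $\pi(X)\sim\mathrm{Li}(X)$ already suffices, so the error-term point you flag is not actually delicate.
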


\begin{remark} We make the following remarks:
\begin{enumerate}
\item The first value of $\mathrm{Li}(x)$ which is greater than $4$ is $\mathrm{Li}(8)\approx 5.25$, so as a very rough heuristic, assuming that $p>7$ and that $2,3, 5$ and $7$ are primes of good reduction for both $E_1$ and $E_2,$ one might (very naively and optimistically) hope to witness noncongruence within the first four primes. 

\item Many explicit bounds on the error in the asymptotic~\eqref{eq:chebotarev-1-4} exist in the literature. Let $K/\Q$ be the degree $|H|$ extension cut out by the product representation $\rhobar$. Under the generalized Riemann hypothesis, Greni\'{e} and Molteni \cite{gren-molt} show that the error term is bounded above by
\[
\sqrt{X} \left[ \left( \frac{1}{2\pi}+\frac{3}{\ln(x)} \right) \ln(d_K) + \left( \frac{\ln(x)}{8\pi} + \frac{1}{4 \pi} + \frac{6}{\ln(x)} \right)|H| \right],
\]
where $d_K$ is the absolute discriminant of $K$. Note that $d_K$ can be bounded using the data of the conductors of the elliptic curves.

\item In a similar spirit, Bach and Sorenson \cite{bach-sorenson} prove (again under GRH) that \textit{every} conjugacy class (and therefore a noncongruence witness) should occur for 
\[
X \leq (1 + o(1))\left(\ln(d_K)\right)^2.
\] 

\item In principle, it should be possible to determine the size of the image representation by studying the intersection of the $p$-division fields $\Q(E_1[p])\cap\Q(E_2[p])$; see, for instance, \cite[Section 2]{daniels-hatley-ricci}. However, this approach quickly becomes computationally infeasible, since if $\rhobar_{E,p}$ is surjective, then 
\[ 
[\Q(E[p]):\Q]=|\mathrm{GL}_2(\F_p)|=p(p-1)^2(p+1).
\] 
When $p=3$, we have $|\mathrm{GL}_2(\F_p)|=48$, and \href{https://www.lmfdb.org/NumberField/?degree=48}{no fields of degree 48} are precomputed in the LMFDB \cite{lmfdb}. On the other hand, the results of this paper characterize the size of these intersections via very explicit and easily-computable criteria.
\item The comparison of residual Galois representations via ``witness'' Frobenius classes is also a fundamental technique in \cite{paramodularity}.
\end{enumerate}
\end{remark}

While the witness ratios described in Theorem~\ref{thm:intro-sample-thm} possess computational utility, we are also able to gain a theoretical understanding of when each admissible group occurs. When $p \geq 5$, we obtain the following result (see Section~\ref{subsec:realizations}), where we write $\chi_d$ for the quadratic character of conductor $d$ and $p^\ast=(-1)^{\frac{p-1}{2}}p$.

\begin{lthm}
Let $E_1,E_2$ be elliptic curves over $\Q$ with surjective mod $p$ Galois representations $\rhobar_{E_i,p}$, where $p \geq 5$. Let $H$ be the image of the product representation $\rhobar_{E_1,p} \times \rhobar_{E_2,p}$. Then $H$ is conjugate to:
\begin{enumerate}
\item $\Gamma_\mathrm{id}$ if and only if $\rhobar_{E_1,p} \simeq \rhobar_{E_2,p}$. 
\item $H_\pm$ if and only if $\rhobar_{E_2,p} \simeq \chi_{d} \otimes \rhobar_{E_1,p}$ for some squarefree $d \notin\{1,p^\ast\}$. Thus pairs of quadratic twists $(E,E^{(d)})$ give examples of $H_\pm$.
\item $\Gamma_\chi$ if and only if $\rhobar_{E_2,p} \simeq \chi_{p^\ast} \otimes \rhobar_{E_1,p}$.  Thus pairs of quadratic twists $(E,E^{(p^\ast)})$ give examples of $\Gamma_\chi$.
\item $\Delta$ in every other case.
\end{enumerate}
\end{lthm}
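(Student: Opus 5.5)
The plan is to classify the possible images via Goursat's lemma and then read off the cases from how the two representations are linked. Write $H\subset G\times G$ with $G=\GL_2(\F_p)$, and let $\pi_1,\pi_2$ be the two projections. Both are surjective by hypothesis. Goursat's lemma gives normal subgroups $N_1=\ker\pi_2|_H$ and $N_2=\ker\pi_1|_H$ of $G$, together with an isomorphism $\theta\colon G/N_1\to G/N_2$ whose graph is $H$.

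Two further constraints come from the Weil pairing. Since $\det\rhobar_{E_i,p}$ is the mod $p$ cyclotomic character for both curves, $H$ lies in $\{(g,h):\det g=\det h\}$. This forces $N_i\supseteq \SL_2(\F_p)$ to fail only in a controlled way, and forces $\theta$ to be compatible with the determinant. For $p\ge5$ the normal subgroups of $\GL_2(\F_p)$ are the subgroups of the center $Z$, and the subgroups $N$ with $N\supseteq\SL_2(\F_p)$. The second kind have abelian quotient, and the determinant compatibility then forces $N_1=N_2=G$ modulo diagonal constraints already absorbed by the determinant condition. This yields $\Delta=\{(g,h):\det g=\det h\}$.

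The remaining case is $N_1\cap\SL_2=\{\pm1\}$ or trivial. Here $\theta$ induces an automorphism of $\PSL_2(\F_p)$, or of $\SL_2(\F_p)$. By the structure of $\Aut(\PGL_2(\F_p))$, with every automorphism inner and $\SL_2$ characteristic, $\theta$ is conjugation by some $M$ up to a central twist. After conjugating by $(1,M)$ we get $H=\{(g,\chi(g)g)\}$ with $\chi$ a character valued in $Z$, and $\det$-compatibility forces $\chi^2=1$. So $H$ is $\Gamma_{\mathrm{id}}$, $\Gamma_\chi$ with $\chi=(\det)^{(p-1)/2}$ the unique nontrivial quadratic character of $G$, or $H_\pm=\{(g,\pm g)\}$ arising from $N_1=N_2=\{\pm1\}$. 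I expect the earlier section on general $p$ to have already carried out this enumeration, so these should be exactly the groups $\Gamma_{\mathrm{id}},\Gamma_\chi,H_\pm,\Delta$ from that section, and I would cite it.

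The translation to twists goes as follows.
\begin{itemize}
\item $H$ is conjugate to $\Gamma_{\mathrm{id}}$ exactly when $\rhobar_{E_2,p}=M^{-1}\rhobar_{E_1,p}M$.
\item $H$ is contained in a conjugate of $H_\pm$ exactly when $\rhobar_{E_2,p}\simeq\psi\otimes\rhobar_{E_1,p}$ for a character $\psi\colon\GQ\to\{\pm1\}$. Such a $\psi$ is $\chi_d$ for a squarefree $d$.
\item The image is all of $H_\pm$, rather than the graph of a homomorphism $G\to\{\pm1\}$, exactly when $\psi$ does not factor through $\rhobar_{E_1,p}$. The only quadratic subfield of $\Q(E_1[p])$ is $\Q(\sqrt{p^\ast})$, because $G$ has the unique index-2 subgroup $\ker\chi$ and its fixed field inside $\Q(\zeta_p)$ is $\Q(\sqrt{p^\ast})$.
\item Hence $d=1$ gives $\Gamma_{\mathrm{id}}$, $d=p^\ast$ gives $\Gamma_\chi$ (since $\chi_{p^\ast}=\chi\circ\rhobar_{E_1,p}$), and any other $d$ gives $H_\pm$.
\item Conversely, if $H$ is conjugate to $\Gamma_\chi$ then composing with $\rhobar$ shows $\rhobar_{E_2,p}\simeq\chi_{p^\ast}\otimes\rhobar_{E_1,p}$.
\item $\Delta$ is whatever remains.
\end{itemize}
The examples with quadratic twists are immediate, because $\rhobar_{E^{(d)},p}=\chi_d\otimes\rhobar_{E,p}$.

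The main obstacle is the group-theoretic step of pinning down the Goursat data. There are two points. First, one must rule out intermediate normal subgroups, for instance $N_1$ strictly between $\SL_2$ and $G$ with $\theta$ nontrivial on the abelian quotient $\F_p^\times$. The determinant-equality constraint kills these, since on those quotients $\theta$ must be the identity induced by $\det$, which collapses them into $\Delta$. Second, one must handle the central twist in the automorphism of $G/Z'$ carefully. Here I would use that automorphisms of $\GL_2(\F_p)$ are inner composed with $g\mapsto\lambda(\det g)g$ for $p\ge5$.
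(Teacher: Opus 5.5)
Your proposal is correct and follows essentially the same route as the paper: invoke the classification of Theorem~\ref{thm:class}, turn containment of $H$ in a conjugate of $H_\pm$ into a twist $\rhobar_{E_2,p}\simeq\chi_d\otimes\rhobar_{E_1,p}$, and separate $\Gamma_\chi$ from $H_\pm$ according to whether $\chi_d$ factors through $\rhobar_{E_1,p}$, i.e.\ whether $\Q(\sqrt{d})$ is the unique quadratic subfield $\Q(\sqrt{p^\ast})$ of $\Q(E_1[p])$ (the paper phrases this as Goursat on $G\times\{\pm 1\}$, which amounts to the same thing). Your informal recap of the classification has two slips, harmless only because you cite the theorem as the paper does: in the $\Delta$ case one has $N_1=N_2=\SL_2(\F_p)$, not $G$; and in the central case the determinant condition forces $N_i\subseteq\{\pm I\}$ outright (since $(\lambda I,I)\in H$ requires $\lambda^2=1$), which is stronger than $N_i\cap\SL_2(\F_p)\subseteq\{\pm I\}$.
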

This is proven as Theorem~\ref{thm:explicit-realizations}. See Theorem~\ref{thm:3-delta} for a description of the extra admissible group when $p=3.$

\begin{remark}It would be interesting to extend this study to the case of non-surjective representations arising from elliptic curves, as well as more general modular Galois representations taking values in $\GL_2(\F_{p^m})$.\end{remark}

\section*{Acknowledgements} The authors thank Allechar Serrano L\'{o}pez, Heidi Goodson and Jen Berg for bringing them together through the Rethinking Number Theory (RNT7) workshop in the summer of 2026. RNT7 is supported by NSF grant DMS-2418528. 

The authors are extremely grateful to Harris Daniels for helpful conversations related to this paper, and especially for providing a Magma implementation of Goursat's lemma.

Partial support for this research was provided by an AMS-Simons Research Enhancement Grant for Primarily Undergraduate Institution Faculty. Partial support for this research was provided by a generous gift to the Union College Mathematics Department by David '74 and Jeanette Wagner.

\section*{A note on AI use during the preparation of this paper} 
The genesis of this paper, the introduction, the background material, and the original results (e.g. Theorem~\ref{thm:intro-sample-thm}) were generated solely by human thought and computational experimentation. Later, as a further experiment, the authors queried Claude (running the Fable 5 model) about whether their results could be generalized in an explicit fashion for general $p$. Claude proposed the general statements and a proof outline, which the authors checked, corrected, and filled out, resulting in Section~\ref{sec:general-p}. At various points, the human authors had to make corrections to false assertions made by the AI. Ultimately, we take full responsibility for the soundness of the arguments therein, and we make this disclosure in the spirit of the \href{https://leidendeclaration.ai}{Leiden Declaration}.

\section{Background on elliptic curves and Galois representations}\label{sec:background}

In this section we very briefly recollect some of the most important facts about the Galois representations associated to elliptic curves. To that end, let $E/\Q$ be an elliptic curve of conductor $N$, and let $p$ be a rational prime. By considering the action of $G_\Q=\mathrm{Gal}(\bar{\Q}/\Q)$ on the $p$-torsion points $E[p]$, one obtains a 2-dimensional Galois representation which we will denote $\rhobar_{E,p}$. More precisely, after choosing a basis for $E[p]$, we obtain a continuous homomorphism
\[
\rhobar_{E,p} \colon G_\Q \to \mathrm{GL}_2(\F_p).    
\]
Since the image of this map is finite, this representation cuts out a finite extension $K/\Q$ of the rationals, where $K=\bar{\Q}^{\mathrm{ker}\bar{\rho}_{E,p}}$ is the fixed field of the kernel. More explicitly, we have $K=\Q(E[p])$ is the $p$-division field, obtained by adjoining to $\Q$ the coordinates of all $p$-torsion points. 

If we write $G=\mathrm{Gal}(K/\Q)$, then we have $G \simeq \mathrm{Im}(\bar{\rho}_{E,p}) \subset \mathrm{GL}_2(\F_p).$ By a result of Serre, this map is actually surjective for most $p$, provided $E$ does not have complex multiplication; in general, a classification due to Dickson lists the possible subgroups of $\mathrm{GL}_2(\F_p)$ which can arise in this way \cite{serre72}. Throughout this paper, we will assume that our residual representations $\bar{\rho}_E$ are surjective.

Since $K/\Q$ is a number field, one can ask how rational primes decompose in this extension. The primes that ramify are contained in those that divide $Np$. If $\ell \nmid Np$, then $\ell$ is unramified in $K/\Q$, and so the conjugacy class $\mathrm{Frob}_\ell$ is well-defined. For such $\ell$, we have: 

\begin{enumerate}
\item $\tr(\bar{\rho}_E(\mathrm{Frob_\ell)}) \equiv a_\ell(E) \mod p$; \quad \text{and} 
\item $\mathrm{det}(\bar{\rho}_E(\mathrm{Frob_\ell)}) \equiv \ell \mod p$.
\end{enumerate}

(In the first bullet, we use the standard notation for the invariant $a_\ell(E)=\ell+1-\#\tilde{E}(\F_\ell)$. The second bullet is a specialization of the more general fact that $\det \bar{\rho}_E$ is the mod $p$ cyclotomic character, which we will denote by $\chi_{cyc}$.)

Recall that $G=\mathrm{Gal}(K/\Q)$ and fix a conjugacy class $C$ inside $G$. The Chebotarev Density Theorem tell us that the rational primes $\ell$ for which $\mathrm{Frob}_\ell$ belongs to $C$ has density $\frac{|C|}{|G|}$. One consequence of this theorem is that the Frobenius elements at good $\ell$ (which excludes only finitely many) are dense in $G_\Q$. It then follows from the Brauer-Nesbitt Theorem that properties (1) and (2) above completely determine $\bar{\rho}_E$ up to semisimplification. 

\subsection{The motivating question} Now suppose we have \textit{two} elliptic curves, $E_1/\Q$ and $E_2/\Q$, with conductors $N_1$ and $N_2$, respectively. Then one may wish to compare $\rhobar_{E_1,p}$ and $\rhobar_{E_2,p}$; for instance, one may wish to determine whether $\rhobar_{E_1,p} \simeq \rhobar_{E_2,p}$. (Recall that two representations are said to be isomorphic if there exists a matrix $M \in \mathrm{GL}_2(\F_p)$ such that $\rhobar_{E_2,p} (\sigma)=M\rhobar_{E_1,p}(\sigma)M^{-1}$ for all $\sigma \in G_\Q$.)

As mentioned in the introduction, given a pair of elliptic curves, one can effectively determine whether they are congruent using the following two facts. First, as explained above, if $\rhobar_{E_1,p} \simeq \rhobar_{E_2,p}$, then for every $\ell \nmid pN_1 N_2$, we must have $a_\ell(E_1) \equiv a_\ell(E_2) \mod p$. Second, it suffices to check only finitely many such $\ell$ values, thanks to the Sturm bound (\cite[Corollary 9.20]{stein-book}), which was stated in \eqref{eq:sturm-bound}.

This project was originally motivated by a desire to study the sharpness of the Sturm bound and to seek more efficient heuristics for distinguishing non-isomorphic Galois representations.

\subsection{Our approach}\label{subsec:approach} Retain the notation of the previous section. Recall our assumption that the representations $\rhobar_{E_i,p}$ are surjective. 

In addition to the individual representations $\rhobar_{E_i,p}$, we can define the product representation
\[
\rhobar = \bar{\rho}_{E_1,p} \times \bar{\rho}_{E_2,p} ~\colon G_\Q \to \mathrm{GL}_2(\F_p) \times \mathrm{GL}_2(\F_p)
\]
via the diagonal embedding
\[
\sigma \mapsto (\bar{\rho}_{E_1,p}(\sigma), \bar{\rho}_{E_2,p}(\sigma)) \quad \text{for all } \sigma \in G_\Q.
\]
Let us write $H=\mathrm{Im}(\rhobar)$ for the image of this representation. This is a subset of $\mathrm{GL}_2(\F_p) \times \mathrm{GL}_2(\F_p)=G_1 \times G_2$, and since each $\rhobar_{E_i,p}$ is surjective, the projections $\pi_1 \colon H \to G_1$ and $\pi_2 \colon H \to G_2$ are both surjective. Furthermore, since $\det \rhobar_{E_i,p}=\chi_{cyc}$ for both $i=1,2$ we must have $\det(g_1)=\det(g_2)$ for all $(g_1,g_2) \in H.$

\begin{definition}\label{def:admissible}
Let $G=\GL_2(\F_p)$ and let $H$ be a subgroup of $G \times G$. We say that $H$ is \textit{admissible} if $H$ surjects onto each factor of $G$, and for all $(g_1,g_2) \in H$ we have $\det(g_1)=\det(g_2)$. 
\end{definition}

 Goursat's lemma clarifies the possibilities for $H$.

\begin{lemma}[Goursat]\label{lem:goursat}
Let $A,B$ be groups and $H\leq A\times B$ a subgroup projecting surjectively onto each factor. Set $N_A=\{a : (a,1)\in H\}\trianglelefteq A$ and $N_B=\{b:(1,b)\in H\}\trianglelefteq B$. Then $H$ induces an isomorphism $\varphi\colon A/N_A\xrightarrow{\;\sim\;}B/N_B$ and
\[
H=\{(a,b) : \varphi(aN_A)=bN_B\}.
\]
In particular $N_A\times N_B\subseteq H$.
\end{lemma}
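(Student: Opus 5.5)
The plan is to prove the three assertions in order: that $N_A$ and $N_B$ are normal, that the rule $aN_A \mapsto bN_B$ for $(a,b)\in H$ is a well-defined isomorphism, and that $H$ equals the graph of this isomorphism.

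\textbf{Normality.} $N_A$ is a subgroup, being the preimage of $H\cap(A\times 1)$ under $a\mapsto(a,1)$. Let $n\in N_A$ and $a\in A$. Surjectivity of the first projection gives some $b$ with $(a,b)\in H$. Then
\[
(a,b)(n,1)(a,b)^{-1}=(ana^{-1},1)\in H,
\]
so $ana^{-1}\in N_A$. The same argument, using surjectivity of the second projection, shows $N_B\trianglelefteq B$.

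\textbf{The map $\varphi$.} For $a\in A$, choose $b$ with $(a,b)\in H$ and set $\varphi(aN_A)=bN_B$.
\begin{itemize}
\item Independence of the choice of $b$: if $(a,b),(a,b')\in H$, then $(a,b)^{-1}(a,b')=(1,b^{-1}b')\in H$. Hence $b^{-1}b'\in N_B$.
\item Independence of the coset representative: if $a'=an$ with $n\in N_A$ and $(a,b)\in H$, then $(a,b)(n,1)=(an,b)\in H$. So $a'$ is also paired with $b$.
\item Homomorphism: this holds because $H$ is closed under multiplication.
\item Surjectivity: this follows from surjectivity of the second projection.
\item Injectivity: suppose $\varphi(aN_A)=N_B$. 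Then $(a,b)\in H$ with $b\in N_B$. Since $(1,b)\in H$, we get $(a,b)(1,b)^{-1}=(a,1)\in H$, so $a\in N_A$.
\end{itemize}
The symmetric construction gives the inverse map, but the checks above already suffice.

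\textbf{Description of $H$.} By construction every $(a,b)\in H$ satisfies $\varphi(aN_A)=bN_B$. Conversely, suppose $\varphi(aN_A)=bN_B$. Choose $b_0$ with $(a,b_0)\in H$. Then $b_0N_B=bN_B$, so $b=b_0m$ for some $m\in N_B$. Therefore $(a,b)=(a,b_0)(1,m)\in H$.

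The final claim follows directly: for $(n,m)\in N_A\times N_B$ we have $(n,m)=(n,1)(1,m)\in H$.

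There is no real obstacle here. The only point requiring care is the well-definedness of $\varphi$, and that is exactly where the definition of $N_B$, together with closure of $H$ under inverses and products, is used.
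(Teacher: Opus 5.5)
Your proof is correct and complete. It is the standard verification of Goursat's lemma: normality via conjugation by a lift $(a,b)\in H$, well-definedness and bijectivity of $aN_A\mapsto bN_B$, and the graph description via $(a,b)=(a,b_0)(1,m)$. The paper itself quotes this lemma as a standard result without proof, so there is no different route to compare against.
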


In particular, the possible subgroups $H$ which can arise as the image of $\rhobar$ are, up to conjugacy, parameterized by pairs $(N,\phi)$, where $N \triangleleft \mathrm{GL}_2(\F_p)$ is a normal subgroup and $\phi \in \mathrm{Aut}(G/N)$.

Given the scarcity of normal subgroups of $\mathrm{GL}_2(\F_p)$, the number of 
admissible image groups $H$ is extremely limited. For example, when $p=3$ there are only four non-diagonal admissible image groups, and when $p>3$ there are only three. We describe the situation for $p \geq 5$ in Section~\ref{sec:general-p} and the special case of $p=3$ in Section~\ref{sec:main-results}.

If $\rhobar_{E_1,p} \simeq \rhobar_{E_2,p}$, then $\mathrm{Im}(\rhobar)=H$ will be the diagonal subgroup; otherwise, we will have some rational prime $\ell$ such that 
\[
(\bar{\rho}_{E_1,p} \times \bar{\rho}_{E_2,p})(\mathrm{Frob_\ell)} = (g_1,g_2) \quad \text{with} \quad \tr(g_1) \neq \tr(g_2).
\]
Call such a pair $(g_1,g_2)$ \textit{trace-different.} Since trace is invariant under conjugation, and since conjugacy classes correspond to Frobenius elements, the number of such trace-different elements in $H$ gives rich arithmetic information about the elliptic curves, their Galois representations, and the distribution of their Frobenius traces, as we show. This motivates the following definition.
\begin{definition}\label{def:witness-ratio}
For an admissible subgroup $H \subset \mathrm{GL}_2(\F_p) \times \mathrm{GL}_2(\F_p)$, define its \textit{witness ratio} by
\[
W(H)=\frac{\#\{(g_1,g_2) \in H \ | \ \tr(g_1) \neq \tr(g_2)\}}{|H|}.
\]
Equivalently, it is defined by Equation~\eqref{eq:WH-defn}.
\end{definition}
The main goal of this paper is to classify the admissible image subgroups for each prime $p$ and to compute their witness ratios. From this information, we can make a few deductions. For example:

\begin{enumerate}
\item Consider the value $\delta(p) = \min_{\text{admissible}\ H} W(H)$. Then if $\rho_{E_1,p}$ and $\rho_{E_2,p}$ are distinct surjective mod $p$ representations coming from elliptic curves, then their Frobenius traces must differ on a set of primes of \textit{density} at least $\delta(p)$.

\item For a concrete pair of curves, one may compute the ratio
\[
 \frac{\#\{\textrm{good primes } \ell < X \ |  \ a_\ell(E_1) \not\equiv a_\ell(E_2) \mod p \}}{\#\{\textrm{good primes } \ell < X\}}
\]
for large values of $X$, and in practice, this value will approach $W(H_i)$ for some admissible group $H_i$. One may then reasonably conclude that the image of the product representation is conjugate to $H_i$. See Theorem~\ref{thm:explicit-realizations} for an explicit description of a non-probabilistic determination. See the file \texttt{extended\_ncr.sage} in \cite{code} for a SageMath implementation of computing this ratio.
\end{enumerate}

\subsection{Relation to entanglements of division fields} For the elliptic curves $E_1$ and $E_2$ and our fixed prime $p$, consider the corresponding $p$-division fields $K_i=\Q(E_i[p])$ obtained by adjoining to $\Q$ all of the coordinates of the points in $E_i(\bar{\Q})[p]$. Then $\operatorname{Im}(\rhobar_{E_i,p}) \simeq \operatorname{Gal}(K_i/\Q)$ for each $i=1,2$. Furthermore, the existence of the Weil pairing forces $\Q(\zeta_p) \subset K_i$, where $\zeta_p$ is a primitive $p$-th root of unity. Thus $K_1 \cap K_2 \supset \Q(\zeta_p).$ 

The image of the product representation $\rhobar$ is isomorphic to the Galois group of the compositum $K_1.K_2 / \Q$. If $K_1 \cap K_2 = \Q(\zeta_p)$, then this product representation is as large as possible, and this should be the generic case. (Our computations, explained in Section~\ref{sec:main-results}, support this.) On the other hand, it can sometimes happen that $K_1$ and $K_2$ have a larger intersection, resulting in a smaller $\operatorname{Im}(\rhobar)$. Such \textit{entanglements} have been the subject of much recent study \cite{DLM22, DLM23, DLRM, DGL}, though most of the literature studies the entanglement between $p$- and $q$-division fields for the same elliptic curve. In any case, since the degrees involved make explicitly computing division fields prohibitively expensive for most primes $p$, our methods yield an alternative method to effectively determine the entanglement of two elliptic curves.

\section{General results for $p \geq 5$}\label{sec:general-p}

In this section we classify the possible subgroups for $\rhobar$ for all $p\geq 5$, and we compute their witness ratios. We also characterize the pairs of elliptic curves that realize each subgroup, and we illustrate this with some concrete examples and explicit computations. To that end, let us set some notation for the rest of the section. We fix a prime $p \geq 5$ and set $G=\GL_2(\F_p)$ and $S=\SL_2(\F_p)$. Recall that 
\[
|G|=p(p-1)^2(p+1).
\] 
Denote by $Z\cong\F_p^\times$ the center of $G$. Write $\chi$ for the quadratic character of $\F_p^\times$ (i.e. the Legendre symbol mod $p$).

Define $J=\dm{1&1\\0&1}$, $J_\epsilon=\dm{1&\epsilon\\0&1}$ with $\epsilon \in \F_p^\times$ a fixed nonsquare. Set
\[
\Delta=\{(g,h)\in G\times G:\det g=\det h\},\qquad \text{so} \quad |\Delta|=|G|^2/(p-1).
\]
Suppose we have fixed elliptic curves $E_1,E_2/\Q$ with surjective residual representations $\rhobar_{E_i,p}\colon\GQ\to G$; since $\det\rhobar_i=\bar\chi_{\mathrm{cyc}}$ is the cyclotomic character, the image $H$ of $\rhobar=\rhobar_{E_1,p}\times\rhobar_{E_2,p}$ lies in $\Delta$ and projects onto both factors. We work up to conjugacy in $G\times G$, so that our arguments are independent of our choice of basis of $E_1[p]$ and $E_2[p]$. 

The results in this section were discovered with the help of Claude (Fable 5). 

\subsection{Classifying the admissible subgroups}\label{subsec:classifying-admissible-subgroups} Let us define the following subgroups of $G \times G:$
\[
\Gamma_{\mathrm{id}}=\{(g,g) \ | \ g \in G\},\quad
\Gamma_{\chi}=\{(g,\chi(\det g)g)\ | \ g \in G\},\quad \text{and} \quad
H_{\pm}=\{(g,\pm g) \ | \ g \in G\},
\]
with $\Gamma_{\mathrm{id}},\Gamma_\chi\subset H_\pm\subset\Delta$, the first two of index $2$ in the third. We see that $\Gamma_{\mathrm{id}}$ is the usual {\it diagonal subgroup}, $\Gamma_\chi$ is the $\chi${\it -twisted diagonal} subgroup, $H_\pm$ is the {\it sign-twisted diagonal} subgroup, and $\Delta$ is the full \textit{determinant-equal} subgroup. In this section, we will prove the following theorem.

\begin{theorem}\label{thm:class} Let $p \geq 5$. Every $H\leq\Delta$ with surjective projections is $(G\times G)$-conjugate to exactly one of $\Gamma_{\mathrm{id}},\Gamma_\chi,H_\pm,\Delta$.
\end{theorem}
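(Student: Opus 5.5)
We will apply Goursat's lemma (Lemma~\ref{lem:goursat}) with $A=B=G$. Let $N_1=\{a:(a,1)\in H\}$ and $N_2=\{b:(1,b)\in H\}$. Goursat gives $G/N_1\cong G/N_2$ and identifies $H$ as the graph of an isomorphism $\varphi$. The first step is to pin down $N_1$ and $N_2$.

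\textbf{Step 1: the kernels.} Since $H\le\Delta$, we have $N_i\le S$. Each $N_i$ is normal in $G$, and for $p\ge5$ the normal subgroups of $G$ contained in $S$ are $1$, $\{\pm1\}$ and $S$. This follows because $\PSL_2(\F_p)$ is simple and the normal subgroups of $S$ that are normal in $G$ are exactly those three.

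The isomorphism $G/N_1\cong G/N_2$ forces $N_1$ and $N_2$ to have the same order, so $N_1=N_2=:N$. This gives three cases.

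\textbf{Case $N=S$.} Then $H\supseteq S\times S$, so $|H|=|G|^2/(p-1)=|\Delta|$. Hence $H=\Delta$.

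\textbf{Case $N=1$.} Then $H=\{(g,\varphi(g))\}$ with $\varphi\in\Aut(G)$ and $\det\varphi(g)=\det g$.

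\textbf{Case $N=\{\pm1\}$.} Then $H\supseteq\{\pm1\}\times\{\pm1\}$, and $H$ is the graph of an automorphism $\bar\varphi$ of $G/\{\pm1\}$ preserving determinants.

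\textbf{Step 2: the automorphisms.} We use the classical description of $\Aut(\GL_2(\F_p))$ for $p\ge5$ (Dieudonné / Schreier–van der Waerden). Every automorphism has the form $g\mapsto \mu(g)\,\sigma(g)$, where $\sigma$ is inner or the contragredient $g\mapsto {}^tg^{-1}$, and $\mu\colon G\to Z$ is a homomorphism for which the map is bijective. No field automorphisms occur since $\F_p$ is prime.

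Determinant preservation rules out the contragredient. Indeed, $\det({}^tg^{-1})=\det g^{-1}$, and $\mu(g)^2\det g^{-2}=1$ would require $\det g^{2}\in(\F_p^\times)^2$ to be a square of a homomorphic image of $\det$. Checking this on $g=\mathrm{diag}(t,1)$ with $t$ a generator gives a contradiction for $p\ge5$.

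For inner $\sigma$, conjugating by $(1,M^{-1})$ reduces to $\varphi(g)=\mu(g)g$ with $\mu(g)^2=1$. Then $\mu$ factors through $\det$ into $\{\pm1\}$, so $\mu$ is trivial or $\chi\circ\det$. This yields $\Gamma_{\mathrm{id}}$ or $\Gamma_\chi$.

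\textbf{Step 3: the case $N=\{\pm1\}$.} Here $H$ is the preimage of a graph in $(G/\pm1)^2$, so $H$ contains the index-$2$ subgroup $H'=\{(g,\varphi(g))\}$ for a lift of $\bar\varphi$. The main obstacle is describing $\Aut(G/\{\pm1\})$ and showing that each determinant-preserving automorphism lifts to one of $G$ up to sign.

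My first approach is as follows. An automorphism of $G/\{\pm1\}$ restricts to $S/\{\pm1\}=\PSL_2(\F_p)$, whose automorphism group is $\PGL_2(\F_p)$. After an inner conjugation we may therefore assume $\bar\varphi$ is the identity on $S/\pm1$. Then $\bar\varphi(g)=\bar\nu(g)\,g$ for some $\bar\nu$ with values in the centralizer of $\PSL_2$ in $G/\pm1$, namely $Z/\pm1$. Determinant preservation forces $\nu(g)^2=\pm1$ on lifts.

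Lifting gives $H=\{(g,\pm\nu(g)g)\}$, and one must show $\pm\nu$ can be chosen to be a homomorphism. The awkward subcase is $\nu(g)^2=-1$, i.e.\ $\nu(g)$ a fourth root of unity when $p\equiv1\bmod4$. I expect this to be excluded by evaluating on a generator of $\det$, using that $g\mapsto\nu(g)^2\in\{\pm1\}$ is a homomorphism through $\det$. After this, $H=\{(g,\pm\mu(g)g)\}$ with $\mu\in\{1,\chi\circ\det\}$, which equals $H_\pm$ in either case.

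\textbf{Step 4: uniqueness.} The four groups are pairwise non-conjugate. $\Delta$ is distinguished by its order. $H_\pm$ has order $2|G|$, while $\Gamma_{\mathrm{id}}$ and $\Gamma_\chi$ have order $|G|$. Finally, $\Gamma_{\mathrm{id}}$ and $\Gamma_\chi$ differ under conjugation because every element of $\Gamma_{\mathrm{id}}$ is trace-equal, but $\Gamma_\chi$ contains $(g,-g)$ with $\chi(\det g)=-1$ and $\tr g\neq0$.
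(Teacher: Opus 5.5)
Your route is the paper's: Goursat, then the kernels are forced into $S$ and hence equal to $1$, $\{\pm I\}$ or $S$, then you classify the determinant-preserving automorphisms of $G$ and of $G/\{\pm I\}$. Steps 1 and 4 are sound. (In Step 1, excluding a normal $N<S$ with $N\{\pm I\}=S$ needs perfectness of $S$, or that $-I$ is its only involution, not just simplicity of $\PSL_2(\F_p)$.)

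\textbf{Step 2.} It contains a false claim: determinant preservation does \emph{not} rule out the contragredient. For $2\times2$ matrices, ${}^tg^{-1}=(\det g)^{-1}\,wgw^{-1}$ with $w=\dm{0&1\\-1&0}$. So $g\mapsto\det(g)\,{}^tg^{-1}$ is a determinant-preserving automorphism; in fact it is conjugation by $w$. Your condition $\mu(g)^2=\det(g)^2$ is met by $\mu=\det$, and there is no contradiction at $\mathrm{diag}(t,1)$. The conclusion of Step 2 survives only because, for $n=2$, the contragredient branch is the inner branch with $\mu$ replaced by $\mu\cdot\det^{-1}$. Your inner-case computation therefore already covers it, and that is what you should say. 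Alternatively, run on $G$ the same $\Aut(S)\cong\PGL_2(\F_p)$, $C_G(S)=Z$ argument you use in Step 3, as the paper does in Lemma~\ref{lem:det-preserving}; this avoids the Dieudonn\'e citation.

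\textbf{Step 3.} This is the real gap. Determinant preservation gives $\nu(g)^2=1$, not $\pm1$. The determinant is well defined on $G/\{\pm I\}$ since $\det(-g)=\det g$. The condition $H\subseteq\Delta$ says $\det(\pm\nu(g)g)=\nu(g)^2\det g$ equals $\det g$. Hence $\bar\nu$ is trivial and $H=\{(g,\pm g)\}=H_\pm$ at once; this is the paper's $\tilde\alpha^2=1$ step.

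As written, you leave the spurious case $\nu(g)^2=-1$ open, and the exclusion you propose would not work. Under your weakened constraint, $\nu^2=\chi\circ\det$ is perfectly consistent when $p\equiv1\bmod4$. Send a generator of $\F_p^\times$ to the class of $\sqrt{-1}$ in $\F_p^\times/\{\pm1\}$; this even gives an automorphism of $G/\{\pm I\}$. So evaluating on a generator yields no contradiction, and only the correct equation $\nu(g)^2=1$ rules this case out. With these two repairs the argument is complete.
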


\subsubsection{Normal subgroups of $G$}\label{subsubsec:normal-subgroups-of-G}
First we classify the normal subgroups of $G$ and their automorphism groups. We begin with the following useful lemma.

\begin{lemma}\label{lem:center-of-S}
The centralizer of $S$ in $G$ is $Z$. More generally, if $g \in G$ satisfies $gsg^{-1}s^{-1} \in \{\pm 1\}$ for all $s \in S$, then $g \in Z$.
\end{lemma}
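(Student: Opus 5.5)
We prove the second, more general statement; the first follows immediately, since $gsg^{-1}s^{-1}=1\in\{\pm1\}$ for $g$ centralizing $S$, and conversely $Z$ obviously centralizes $S$. The plan is to test the hypothesis against the two unipotent generators $J=\dm{1&1\\0&1}$ and its transpose $J^{t}=\dm{1&0\\1&1}$, both of which lie in $S$.

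\textbf{Step 1: reduce to commuting with $J$ and $J^t$.} Suppose $gJg^{-1}J^{-1}=\varepsilon\in\{\pm1\}$. Then $gJg^{-1}=\varepsilon J$. Comparing traces gives $2=2\varepsilon$, and since $p$ is odd this forces $\varepsilon=1$. Equivalently, $-J$ has eigenvalue $-1\neq 1$, so it cannot be conjugate to $J$. Hence $g$ commutes with $J$, and the same argument shows that $g$ commutes with $J^t$.

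\textbf{Step 2: compute the centralizers.} Write $g=\dm{a&b\\c&d}$. Commuting with $J$ is equivalent to commuting with $J-1=\dm{0&1\\0&0}$. Expanding, $gE_{12}=E_{12}g$ gives $c=0$ and $a=d$. Symmetrically, commuting with $J^t-1=E_{21}$ gives $b=0$ and $a=d$. Therefore $g=aI$ with $a\neq0$, so $g\in Z$.

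\textbf{Main obstacle.} There is no serious obstacle. The only point needing care is excluding $\varepsilon=-1$ in Step 1, and that is exactly where oddness of $p$ (here $p\ge5$) is used.
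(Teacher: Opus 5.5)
Your proof is correct, and it takes a different route from the paper's at the key step. The paper notes that, since $\pm 1$ is central, $s\mapsto gsg^{-1}s^{-1}$ is a homomorphism $S\to\{\pm 1\}$. It then kills this homomorphism using the perfectness of $\SL_2(\F_p)$ for $p\ge 5$. Only once it knows that $g$ centralizes all of $S$ does it compute the centralizer of $J$ and $J^T$, which is exactly your Step 2.

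You instead rule out the sign directly on the two unipotent elements, using $\tr(gJg^{-1})=2\neq -2=\tr(-J)$. This is more elementary and more general in three ways:
\begin{enumerate}
\item It uses the hypothesis only at $J$ and $J^{t}$, not on all of $S$.
\item It needs nothing about the group structure of $S$.
\item It works for every odd $p$, including $p=3$, where $\SL_2(\F_3)$ is not perfect.
\end{enumerate}
Your trick in fact gives more. If $gJg^{-1}=\lambda J$ for an arbitrary scalar $\lambda$, comparing traces still forces $\lambda=1$. So the passage through the determinant in the proof of Proposition~\ref{prop:normal-subs-are-in-center-or-big}, used there to force $\lambda(s)=\pm 1$ before invoking this lemma, becomes unnecessary. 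The paper uses the same trace-comparison idea later to show $r_\chi\notin\Inn(G)$.

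The paper's argument is more conceptual and matches its later reliance on $S=[S,S]$ in Proposition~\ref{prop:normal-subs-are-in-center-or-big}. For this lemma, though, it is more machinery than is needed.
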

\begin{proof} Suppose $g \in G$ satisfies $gsg^{-1}s^{-1} \in \{\pm 1\}$ for all $s \in S$. Then the map $\phi(s) = gsg^{-1}s^{-1}$ is a homomorphism $\phi \colon S \to \{\pm 1\}$. It is a standard fact that $\SL_2(\F_p)$ is perfect (i.e. its own commutator subgroup) for $p \geq 5$, so this map must in fact be trivial. Thus we have $gsg^{-1}=s$ for all $s\in S$, so $g$ is in the centralizer of $S$.

Write $g=\left(\begin{smallmatrix} a & b \\ c & d \end{smallmatrix} \right)$; then since $g$ is in the centralizer of $S$, we have in particular $gJ=Jg$, which immediately implies $c=0$ and $a=d$. Computing $\left(\begin{smallmatrix}a & b \\ 0 & a\end{smallmatrix}\right)J^T=J^T\left(\begin{smallmatrix}a & b \\ 0 & a\end{smallmatrix}\right)$ now implies that also $b=0$. So we have $g=aI$, and since $g \in G$, we must have $a \in \F_p^\times$, hence $g \in Z$ as desired.
\end{proof}

\begin{proposition}\label{prop:normal-subs-are-in-center-or-big}
If $N \triangleleft G$ is a normal subgroup, then $N \subseteq Z$ or $S \subseteq N$.
\end{proposition}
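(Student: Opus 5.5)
The plan is to intersect $N$ with $S$ and use the simplicity of $\PSL_2(\F_p)$ for $p\geq 5$. Since $N\cap S$ is normal in $S$, its image in $S/\{\pm 1\}=\PSL_2(\F_p)$ is a normal subgroup of a simple group. Hence either $N\cap S\subseteq\{\pm1\}$, or $(N\cap S)\{\pm1\}=S$. In the second case, $N\cap S$ has index at most $2$ in $S$. Because $S$ is perfect, Lemma~\ref{lem:center-of-S} already shows that $S$ has no subgroup of index $2$: such a subgroup would be normal and would give a nontrivial homomorphism $S\to\{\pm1\}$. So $N\cap S=S$, that is, $S\subseteq N$, and we are done in this case.

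Now suppose $N\cap S\subseteq\{\pm 1\}$. We want to show $N\subseteq Z$. Take $g\in N$ and $s\in S$. The commutator $gsg^{-1}s^{-1}$ lies in $N$, because $N$ is normal, so $g\cdot(sg^{-1}s^{-1})\in N$. It also lies in $S$, because it has determinant $1$. Hence $gsg^{-1}s^{-1}\in N\cap S\subseteq\{\pm1\}$ for every $s\in S$. The second statement of Lemma~\ref{lem:center-of-S} then gives $g\in Z$, so $N\subseteq Z$.

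The only input not proved in the paper is the simplicity of $\PSL_2(\F_p)$ for $p\geq5$. This is a standard fact and I would simply cite it. The main point requiring care is the index-$2$ case, and it is handled by the perfectness of $S$ as above. Everything else is formal.
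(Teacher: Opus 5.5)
Your proof is correct. It differs from the paper's mainly in where the simplicity of $\PSL_2(\F_p)$ is applied.

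The paper pushes $N$ forward to its image $\tilde N$ in $\PGL_2(\F_p)$ and intersects with $\PSL_2(\F_p)$. In the trivial case it must then show separately that the centralizer of $\PSL_2(\F_p)$ in $\PGL_2(\F_p)$ is trivial. It does this by lifting to $G$, writing $gsg^{-1}=\lambda(s)s$, and using determinants to force $\lambda(s)=\pm1$ before invoking Lemma~\ref{lem:center-of-S}.

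You instead intersect with $S$ first and then reduce modulo $\{\pm I\}$. In the small case the commutators $gsg^{-1}s^{-1}$ then land directly in $N\cap S\subseteq\{\pm I\}$, and the second statement of Lemma~\ref{lem:center-of-S} applies at once. This skips the centralizer computation in $\PGL_2(\F_p)$ and is slightly more economical.

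In the large case the two arguments are equivalent. The paper's chain $S=[SZ,SZ]\subseteq[NZ,NZ]=[N,N]\subseteq N$ and your index-$2$ argument both rest only on the perfectness of $S$. The commutator trick would also work verbatim for you, since $(N\cap S)\{\pm I\}=S$ gives $S=[S,S]=[N\cap S,N\cap S]\subseteq N$.

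One small correction of attribution: Lemma~\ref{lem:center-of-S} does not itself assert that $S$ has no subgroup of index $2$. That is a direct consequence of the perfectness of $S$, which, like the simplicity of $\PSL_2(\F_p)$, is a standard fact the paper cites rather than proves. So it is a second outside input, not the only one.
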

\begin{proof}
Recall that $\PSL_2(\F_p)$ is simple for $p \geq 5$. Let $\tilde{N}$ denote the image of $N$ inside $\PGL_2(\F_p)$; then $\tilde{N} \cap \PSL_2(\F_p)$ is either trivial or $\PSL_2(\F_p)$. 

First suppose the intersection is trivial. Then the commutator subgroup $[\tilde{N},\PSL_2(\F_p)] \subset \tilde{N} \cap \PSL_2(\F_p)$ is trivial, so $\tilde{N}$ is contained in the centralizer $C$ of $\PSL_2(\F_p)$ in $\PGL_2(\F_p)$. If we show that $C$ is trivial, then it will follow that $N \subseteq Z.$ Let $\tilde{g} \in C$ and consider any lift $g \in G$. Then for any $s \in S$ we must have 
\begin{equation}\label{eq:psl-centralizer}
gsg^{-1}=\left(\begin{smallmatrix} \lambda(s) & 0 \\ 0 & \lambda(s) \end{smallmatrix} \right)s \quad  \text{for some} \ \lambda(s)\in \F_p^\times,    
\end{equation} and this defines a homomorphism $\phi \colon S \to \F_p^\times.$ Composing with the determinant, we see from \eqref{eq:psl-centralizer} that $\lambda^2(s)=1$, hence $\mathrm{Im}(\phi) \subset \{\pm1\}$. It now follows from Lemma~\ref{lem:center-of-S} that $g \in Z$, hence $\tilde{g}=1$ as desired.

In the second case, we have $\PSL_2(\F_p) \subseteq \tilde{N}$, hence $SZ \subseteq NZ$. But since $S$ is its own commutator, we have
\[
S=[S,S]=[SZ,SZ] \subseteq [NZ,NZ] = [N,N] \subseteq N.
\]
So $N$ contains $S$ as desired.
\end{proof}

\begin{remark}\label{rem:prop-fails-p3}
We note that $\PSL_2(\F_3) \simeq A_4$ is not simple, and the argument above fails for $p=3$; indeed, $\GL_2(\F_3)$ contains an additional normal subgroup, normally denoted $Q_8$, which neither contains $\SL_2(\F_3)$ nor is contained in the scalar diagonal matrices. However, our explicit computations, described later in Section~\ref{sec:main-results} and Theorem~\ref{thm:main-p-3}, along with Theorem~\ref{thm:3-delta}, handle this special case. 
\end{remark}

\subsubsection{Fiber products restricted by the determinant condition}\label{subsubsec:fiber-products-over-determinant}

Recall that, by Lemma~\ref{lem:goursat}, if $H \subseteq G \times G$ is a subgroup which surjects onto each factor, then there exist normal subgroups $N_1, N_2 \triangleleft G$ such that $N_1 \times N_2 \subseteq H$ and there is an isomorphism $\phi \colon G/N_1 \to G/N_2$ whose graph is $H$. In particular, we must have $|N_1|=|N_2|$. 

Furthermore, by Proposition~\ref{prop:normal-subs-are-in-center-or-big}, either $S \subseteq N_i$ or $N_i \subseteq Z$. Since $S=[G,G]$, if $S \subseteq N_i$ then $G/N_i$ is abelian, while if $N \subseteq Z$, then $G/N \twoheadrightarrow G/Z=\PGL_2(\F_p)$ which is nonabelian. So either $N_1$ and $N_2$ are both central, or they both contain $S$. 

In our setting, $H=\mathrm{Im}(\rhobar)$ is the image of a product representation $\rhobar_{E_1,p}\times\rhobar_{E_2,p}$; since $\det(\rhobar_{E_i,p})=\chi_{cyc}$ for both $i=1,2$, if $(g,h)\in H$ then $\det(g)=\det(h)$.

\begin{lemma}\label{lem:possibility-delta}
Suppose $N_1$ and $N_2$ contain $S$. Then $H=\Delta$ is the full determinant-equal subgroup.
\end{lemma}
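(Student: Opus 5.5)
Since $S\subseteq N_1$ and $S\subseteq N_2$, Lemma~\ref{lem:goursat} gives $S\times S\subseteq N_1\times N_2\subseteq H$. So it suffices to show that $H$ contains the kernel of the determinant map $\Delta\to\F_p^\times$, $(g,h)\mapsto \det g$, and that this map is surjective on $H$. The kernel of this map is exactly $\{(g,h): \det g=\det h=1\}=S\times S$. By the standing hypothesis that $H$ is admissible (it lies in $\Delta$), $H$ is a subgroup of $\Delta$ containing $S\times S$.

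Surjectivity on $H$ follows from the projection $\pi_1\colon H\to G$ being onto. For any $d\in\F_p^\times$, choose $g\in G$ with $\det g=d$. Then some $(g,h)\in H$ lies over $g$, and it satisfies $\det(g,h)=d$.

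From these two facts, $|H|=|S\times S|\cdot(p-1)=|S|^2(p-1)$. Using $|S|=|G|/(p-1)$, this equals $|G|^2/(p-1)=|\Delta|$. Since $H\le\Delta$, we conclude $H=\Delta$.

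Equivalently, one can argue by cosets. Given $(g,h)\in\Delta$, pick $(g,h')\in H$. Then $\det h'=\det g=\det h$, so $h'^{-1}h\in S$, and hence $(g,h)=(g,h')(1,h'^{-1}h)\in H$.

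There is no real obstacle here. The only point needing care is that $\det$ on $\Delta$ has kernel exactly $S\times S$, which is immediate from the definition of $\Delta$. Note also that we never need to identify $\varphi$ explicitly: the determinant condition alone pins down $H$.
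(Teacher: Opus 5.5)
Your proof is correct and follows essentially the same argument as the paper: both use $S\times S\subseteq H\subseteq\Delta$ and the identification $\Delta/(S\times S)\cong\F_p^\times$ via the determinant, with surjectivity of a projection forcing $H$ to fill out all of $\Delta$. Your coset version makes this explicit, and in fact it only needs $\{1\}\times S\subseteq H$ together with surjectivity of $\pi_1$.
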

\begin{proof}
Since $N_1 \times N_2 \subseteq H$, our hypothesis implies $S \times S \subseteq H$. Then $H/(S \times S)$ is a subgroup of $\F_p^\times \times \F_p^\times$, and the determinant condition forces it to be contained in the diagonal of $\F_p^\times \times \F_p^\times$, hence its preimage is contained in $\Delta$. On the other hand, since the projection to each factor is surjective, this containment must in fact be an equality, so $H=\Delta$ as claimed.
\end{proof}


\begin{lemma}\label{lem:N-size-1-or-2}
Suppose $N_1$ and $N_2$ contain $Z$. Then either $N_1=N_2=\{I\}$ or $N_1=N_2=\{\pm I\}$. 
\end{lemma}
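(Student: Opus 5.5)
We read the hypothesis as the central case of the dichotomy established just before Lemma~\ref{lem:possibility-delta}, namely $N_1, N_2 \subseteq Z$; ``contain'' should be ``are contained in''. The case $N_i \supseteq S$ was already handled by Lemma~\ref{lem:possibility-delta}. The plan is to combine Goursat's lemma with the determinant condition. This pins down $N_i$ inside the very small group $Z \cap S$.

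\textbf{Step 1 (determinant condition).} By Lemma~\ref{lem:goursat}, $N_1 \times \{I\} \subseteq H$. So for every $n \in N_1$ we have $(n, I) \in H$. Since $H$ lies in $\Delta$, this gives
\[
\det n = \det I = 1,
\]
so $N_1 \subseteq S$. The same argument applied to $(I,n) \in H$ for $n \in N_2$ gives $N_2 \subseteq S$.

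\textbf{Step 2 (compute $Z \cap S$).} We now have $N_i \subseteq Z \cap S$. A scalar matrix $aI$ has determinant $a^2$. Hence $Z \cap S = \{aI : a^2 = 1\} = \{\pm I\}$, and since $p$ is odd these two elements are distinct. Therefore each $N_i$ is a subgroup of the group $\{\pm I\}$ of order $2$, so each $N_i$ equals either $\{I\}$ or $\{\pm I\}$.

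\textbf{Step 3 (match the two factors).} Goursat's lemma provides an isomorphism $G/N_1 \cong G/N_2$, which forces $|N_1| = |N_2|$, as noted at the start of \S\ref{subsubsec:fiber-products-over-determinant}. Two subgroups of $\{\pm I\}$ of the same order are equal. Hence either $N_1 = N_2 = \{I\}$ or $N_1 = N_2 = \{\pm I\}$.

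There is no real obstacle in this argument. The only point needing care is the interpretation of the hypothesis: the argument uses $N_i \subseteq Z$, together with $\det$-equality on $H$ to pass from $Z$ down to $\{\pm I\}$.
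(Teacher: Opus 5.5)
Your proof is correct and follows the paper's own argument. Goursat's lemma puts $(\lambda I, I)$ in $H$, the determinant condition forces $\lambda^2=1$, and $|N_1|=|N_2|$ finishes the proof. Your reading of the hypothesis as $N_i \subseteq Z$ is also how the paper's proof actually uses it, since it writes every element of $N_i$ as $\lambda I$.
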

\begin{proof}
Suppose $N_1$ and $N_2$ contain $Z \simeq \F_p^\times$; in particular, for both $i=1$ and $i=2,$ every element of $N_i$ is of the form $\lambda I$ for some $\lambda \in \F_p^\times$. By Goursat's lemma, if $\lambda I \in N_1$, then $(\lambda I, I) \in H$. But then the determinant condition forces $\lambda^2=1$. Since $|N_1|=|N_2|$, the result follows.
\end{proof}

By Goursat's lemma, automorphisms of normal subgroups $N \triangleleft G$ can lead to different fiber products. The next lemma classifies the automorphisms of $G$ that preserve determinants. Recall that $\chi$ denotes the quadratic character of $\F_p^\times.$
 
\begin{lemma}\label{lem:det-preserving}
Suppose $\alpha \in \Aut(G)$ satisfies $\det \circ \alpha = \det$. Then $\alpha \in \Inn(G)$ or $\alpha \in \Inn(G)\cdot r_\chi$, where $r_\chi(g)=\chi(\det g)g$. Furthermore, $r_\chi \not\in \Inn(G).$
\end{lemma}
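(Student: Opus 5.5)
The plan is to first restrict $\alpha$ on the derived subgroup $S=[G,G]$, then correct it by a conjugation so that it fixes $S$ pointwise, and finally pin down what it can do off $S$ using Lemma~\ref{lem:center-of-S} and the determinant condition.

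\emph{Restricting to $S$.} Since $S=[G,G]$ is characteristic, $\alpha$ restricts to an automorphism of $S$. It also preserves $Z$, the center of $G$. I will invoke the classical description of automorphisms of $\SL_2(\F_p)$ for $p\ge 5$: every automorphism of $S$ is induced by conjugation by an element of $G=\GL_2(\F_p)$. This comes from $\Aut(\PSL_2(\F_p))\cong \PGL_2(\F_p)$, since $p$ is prime and there are no field automorphisms. Lifting from $\PSL_2$ to $S$ is possible because $S$ is the universal perfect central extension, so an automorphism of $S$ is determined by the induced map on $S/\{\pm1\}$. Alternatively, the ambiguity is a homomorphism $S\to\{\pm1\}$, which is trivial since $S$ is perfect.

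\emph{Normalizing.} Composing $\alpha$ with a suitable inner automorphism, I may assume $\alpha|_S=\mathrm{id}$.

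\emph{Pinning down $\alpha$ off $S$.} Take any $g\in G$ and $s\in S$. Then
\[
\alpha(g)\,s\,\alpha(g)^{-1}=\alpha(gsg^{-1})=gsg^{-1}.
\]
So $g^{-1}\alpha(g)$ centralizes $S$, and by Lemma~\ref{lem:center-of-S} it lies in $Z$. Write $\alpha(g)=\mu(g)\,g$ with $\mu(g)\in\F_p^\times$. Because $Z$ is central, $\mu\colon G\to\F_p^\times$ is a homomorphism. Since $S=[G,G]\subseteq\ker\mu$, it factors through $\det$: $\mu=\nu\circ\det$ for a character $\nu$ of $\F_p^\times$. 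The hypothesis $\det\alpha(g)=\det g$ gives $\mu(g)^2=1$, so $\nu^2=1$ and hence $\nu$ is trivial or $\nu=\chi$. This yields $\alpha=\mathrm{id}$ or $\alpha=r_\chi$ up to the inner automorphism used in the normalization. One should also check that $r_\chi$ really is an automorphism: it is a homomorphism because $\chi\circ\det$ is central-valued and multiplicative, and it is bijective because $r_\chi\circ r_\chi=\mathrm{id}$, as $\chi(\det(\pm g))=\chi(\det g)$.

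\emph{$r_\chi$ is not inner.} Suppose $r_\chi(g)=hgh^{-1}$ for all $g$. Then $h$ centralizes $S$, since $r_\chi$ is the identity on $S$, so $h\in Z$ and $r_\chi$ would be the identity. But $r_\chi(g)=-g\neq g$ for any $g$ with $\det g$ a nonsquare, e.g.\ $\mathrm{diag}(\epsilon,1)$, which is a contradiction.

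The main obstacle is the first step, the fact that $\Aut(S)$ is realized by $\GL_2(\F_p)$-conjugation. I would cite it as standard (Schreier–van der Waerden / Dieudonné for $\PSL_2(\F_p)$) rather than reprove it.
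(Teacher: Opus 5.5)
Your proposal is correct and follows the paper's proof essentially step for step. You restrict $\alpha$ to $S=[G,G]$, normalize by an inner automorphism using that $\Aut(\SL_2(\F_p))$ is realized by $\GL_2(\F_p)$-conjugation, apply Lemma~\ref{lem:center-of-S} to write $\alpha(g)=\mu(g)g$ with $\mu=\nu\circ\det$, and force $\nu\in\{1,\chi\}$ from the determinant condition; the one deviation is that you show $r_\chi\notin\Inn(G)$ by reusing Lemma~\ref{lem:center-of-S} (an inner automorphism trivial on $S$ is trivial) instead of the paper's trace comparison, which avoids having to pick a nonsquare $\epsilon\neq -1$.
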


\begin{proof}
Since $S=[G,G]$ and commutators are characteristic subgroups, any automorphism of $G$ restricts to an automorphism of $S$. It is well-known that for $p \geq 5$, $\Aut(S) \simeq \PGL_2(\F_p)$ acting via conjugation, so after composing with some inner automorphism, $\alpha \in \Aut(G)$ can be assumed to fix $S$ pointwise. Thus for $g \in G$ and $s \in S$ we have
\[
\alpha(g)s\alpha(g)^{-1}=\alpha(gsg^{-1})=gsg^{-1},
\]
which implies $g^{-1}\alpha(g)$ is in the centralizer of $S$, hence in $Z$ by Lemma~\ref{lem:center-of-S}. Thus $\alpha(g)=\lambda(g)g$ for some map $\lambda \colon G \to \F_p^\times$; in fact, it must be a homomorphism since 
\[
\lambda(g_1 g_2)g_1 g_2=\alpha(g_1 g_2) = \alpha(g_1) \alpha(g_2) = \lambda(g_1)\lambda(g_2)g_1 g_2.
\]

Since $\alpha$ fixes $S$ pointwise, we see that $\lambda|_S$ is trivial, so we can decompose $\lambda$ as $\lambda = \tilde{\lambda} \circ \det$ for some character $\tilde{\lambda}$ on $\F_p^\times$. On the other hand, we are assuming $\det \circ \alpha = \det$, so a direct computation shows that we must have $\tilde{\lambda}^2=1$, hence $\tilde{\lambda} \in \{1, \chi\}$. (Since $\F_p^\times$ is cyclic of even order, $\chi$ is its \textit{unique} quadratic character.) Thus $\lambda \in \{1, \chi \circ \det\}$, so either $\alpha \in \Inn(G)$ or $\alpha \in \Inn(G) \cdot r_\chi$ as claimed. 

The only claim left to check is that $r_\chi \not\in \Inn(G)$. Suppose $r_\chi \in \Inn(G)$; then there would exist some matrix $c \in G$ such that
\[
cgc^{-1}=r_\chi(g),
\]
and in particular, for every $g$ with nonsquare determinant, we must have
\[
cgc^{-1}=-g.
\]
Comparing traces, this would imply that every such $g$ has $\tr(g)=0$, but this is clearly false (e.g. take $g=\left(\begin{smallmatrix}\epsilon & 0 \\ 0 & 1\end{smallmatrix}\right)$ where $\epsilon \not\in (\F_p^\times)^2$ and $\epsilon \neq -1$).
\end{proof}

\subsubsection{Proof of Theorem~\ref{thm:class}}\label{subsubsec:proof-of-classification-theorem} We are now ready to prove that every $H\leq\Delta$ with surjective projections is $(G\times G)$-conjugate to exactly one of $$\Gamma_{\mathrm{id}},\quad \Gamma_\chi, \quad H_\pm,\quad \text{and} \quad \Delta.$$ By Goursat's Lemma, $H$ is determined by a pair $(N_1,N_2)$ of normal subgroups of $G$ with $G/N_1 \simeq G/N_2$. In particular, $|N_1|=|N_2|$. Proposition~\ref{prop:normal-subs-are-in-center-or-big} shows that for $i=1,2$, either $S \subseteq N_i$ or $N_i \subseteq Z$. 

By Lemma~\ref{lem:possibility-delta}, if $S \subseteq N_i$, then $H=\Delta$, which gives one of the claimed possibilities.

Otherwise, we must have $N_i \subseteq Z$. By Lemma~\ref{lem:N-size-1-or-2}, we have either $N_1=N_2=\{I\}$ or $N_1=N_2=\{\pm I\}$. Let us consider each case in turn.

First, suppose $N_1=N_2=\{I\}$. Then by Goursat's lemma, we obtain a subgroup of the form
\[
H_\alpha=\{(g, \alpha(g)) \ | \ g \in G\} \subseteq \Delta, 
\]
where $\alpha \in \Aut(G)$ must be determinant-preserving. Recall that we only care about admissible subgroups up to conjugation, thus by Lemma~\ref{lem:det-preserving}, this means that either $\alpha \in \Inn(G)$ or $\alpha \in \Inn(G) \cdot r_\chi$, where $\chi$ is the quadratic character on $\F_p^\times$. If $\alpha \in \Inn(G)$, then $H_\alpha$ is conjugate to $\Gamma_\mathrm{id}$, and if $\alpha \in \Inn(G) \cdot r_\chi$, then $H_\alpha$ is conjugate to $\Gamma_\chi$.

Finally, suppose $N_1=N_2=\{ \pm I\}$. Let us consider the corresponding quotient $\bar{G}=G/\{\pm I\}$, writing $g \mapsto \bar{g}$ at the level of elements. Thus $G/N_i \simeq \bar{G}$, and by Goursat's lemma we have 
\[
H=\{(g_1,g_2) | \ g_1,g_2 \in G \ \text{ and }\ \bar{\alpha}(\bar{g_1})=\bar{g_2}\ \text{ for some } \bar{\alpha} \in \Aut(\bar{G})\}.
\]
Since $\det(-g)=\det(g)$ and $H \subseteq \Delta$, we must still have $\det \circ \bar{\alpha}=\det$. In contrast to Lemma~\ref{lem:det-preserving}, we will show that every such $\bar{\alpha}$ belongs to $\Inn(\bar{G})$. We have $[\bar{G},\bar{G}]=\PSL_2(\F_p)$ and $\Aut(\PSL_2(\F_p))\simeq \PGL_2(\F_p)$. Furthermore, by Lemma~\ref{lem:center-of-S}, we see that the centralizer of $\PSL_2(\F_p)$ in $\bar{G}$ is exactly $Z/\{\pm I\}$ (this is the content of the ``more general'' statement in that lemma). So by the exact same argument as in Lemma~\ref{lem:det-preserving}, we may assume that $\bar{\alpha}$ fixes $\PSL_2(\F_p)$ pointwise, from which it follows that $\bar{\alpha}(\bar{g})=(\tilde{\alpha} \circ \det)(\bar{g}) \bar{g}$ with $\tilde{\alpha} \colon \F_p^\times \to \F_p^\times / \{\pm 1\}$. But just as before, the preservation of the determinant forces $\tilde{\alpha}^2=1$, hence in this case $\tilde{\alpha}$ is in fact trivial. This shows that if $\alpha \in \Aut(\bar{G})$ preserves determinants, then $\alpha \in \Inn(\bar{G})$, so up to conjugation we obtain only one admissible subgroup in this case, namely
\begin{align*}
H_\pm &= \{ (g_1, g_2) \ |\ g_1, g_2 \in G \text{ such that } \ \bar{g_1}=\bar{g_2} \} \\
&=\{(g,\pm g) \ |\ g \in G\}. 
\end{align*}

The last thing to verify is that the four admissible groups we have listed, namely 
$$\Gamma_{\mathrm{id}},\quad \Gamma_\chi, \quad H_\pm, \quad \text{and} \quad \Delta,$$
are all distinct up to conjugacy. The only pair that is not distinguished by order is the set $\{\Gamma_\mathrm{id},\Gamma_\chi\}$. Suppose these groups were conjugate; then there exists some $(a,b)\in G \times G$ such that for all $g \in G$ we have 
\begin{align*}
bgb^{-1}&=\chi(\det(aga^{-1}))aga^{-1}, 
\end{align*}
and since $\det$ is conjugacy-invariant, this implies 
\[
\chi(\det(g))g = (a^{-1}b)g(a^{-1}b)^{-1}.
\]
But this implies $r_\chi$ is given by conjugation by $a^{-1}b$, contradicting Lemma~\ref{lem:det-preserving}. So this is impossible, hence $\Gamma_\mathrm{id}$ and $\Gamma_\chi$ are not conjugate.
\hfill $\square$

\subsection{Conjugacy classes of the admissible subgroups}\label{subsec:conjugacy-classes}

For each of the admissible subgroups catalogued in Section~\ref{subsec:classifying-admissible-subgroups}, we will now determine their conjugacy classes. First, let us recall the conjugacy classes of $G$:
\begin{itemize}
\item Let $\mathcal{D}_{a,b}$ be the set of diagonalizable matrices with eigenvalues $a, b \in \F_p^\times$ with $a \neq b$. There are $\frac{1}{2}(p-1)(p-2)$ choices\footnote{Note that $\left(\begin{smallmatrix}
    a & 0 \\ 0 & b
\end{smallmatrix}\right)$ and $\left(\begin{smallmatrix}
    b & 0 \\ 0 & a
\end{smallmatrix}\right)$ are conjugate by $\left(\begin{smallmatrix}
    0 & 1 \\ 1 & 0
\end{smallmatrix}\right)$.} 
of $(a,b)$, and for each choice, we have $|\mathcal{D}_{a,b}|=p(p+1)$. A representative is given by $\left( \begin{smallmatrix} a & 0 \\ 0 & b \end{smallmatrix} \right)$.
\item Let $\mathcal{N}_a$ be the set of non-diagonal matrices with one single eigenvalue $a \in \F_p^\times$. For each of the $p-1$ choices for $a$, we have $|\mathcal{N}_a|=p^2-1$. One may choose as a representative element $aJ$ or $aJ_\epsilon$.
\item Let $\mathcal{D}_a$ denote the singleton $\left\{ \left( \begin{smallmatrix}
    a & 0 \\ 0 & a
\end{smallmatrix} \right) \right\}$ for each $a \in \F_p^\times$. There are thus $p-1$ such conjugacy classes, and each has size $1$.
\item Let $\mathcal{E}_\tau$ be the set of matrices whose eigenvalues are $\tau$ and $\tau'$, where $\tau \in \F_{p^2}\setminus\F_p$ and $\tau'$ is the conjugate of $\tau$. There are $\frac{1}{2}(p^2-p)$ choices for $\tau$, and for each choice we have $|\mathcal{E}_\tau|=p^2-p.$ 
\end{itemize}

We see that $G$ has a total of $p^2-1$ conjugacy classes. Given an element $g \in G$, let $$C_G(g)=\{h \in G \ | \ hgh^{-1}=g\}$$ denote the centralizer of $g$ in $G$. Furthermore, write
\[
\det C_G(g)=\{\det(z) \ | \ z \in  C_G(g)\} \leq \F_p^\times
\]
noting that this subgroup is well-defined up to conjugation, since replacing $g$ with a conjugate element $aga^{-1}$ would replace $C_G(g)$ with a conjugate subgroup $aC_G(g)a^{-1}$, and $\det$ is a class function.

\begin{lemma}\label{lem:conjugacy-class-centralizers}
For every $g \in G$ we have $\det C_G(g)=\F_p^\times$ \textit{unless} $g \in \mathcal{N}_a$ for some $a \in \F_p^\times$, in which case $\det C_G(g)=(\F_p^\times)^2.$
\end{lemma}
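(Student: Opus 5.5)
Since $\det C_G(g)$ depends only on the conjugacy class of $g$, we go through the four types of conjugacy classes listed above and compute the centralizer of one convenient representative in each case. For each representative we will either exhibit explicit centralizing elements whose determinants exhaust $\F_p^\times$, or compute the full centralizer and read off its determinant image.

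\emph{Central and split semisimple classes.} For $g \in \mathcal{D}_a$, $g$ is scalar, so $C_G(g)=G$ and $\det C_G(g)=\F_p^\times$. For $g=\dm{a&0\\0&b}\in\mathcal{D}_{a,b}$, every diagonal matrix $\dm{x&0\\0&1}$ commutes with $g$, and these already have determinant $x$ ranging over all of $\F_p^\times$.

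\emph{Non-split semisimple classes.} For $g\in\mathcal{E}_\tau$, identify $\F_p^2$ with $\F_{p^2}$ so that $g$ acts as multiplication by $\tau$. This is possible because $g$ has irreducible characteristic polynomial, so $\F_p[g]\cong\F_{p^2}$. Then multiplication by any $z\in\F_{p^2}^\times$ commutes with $g$, and its determinant is the norm $N_{\F_{p^2}/\F_p}(z)$. Since the norm map on finite fields is surjective onto $\F_p^\times$, we get $\det C_G(g)=\F_p^\times$.

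\emph{Non-semisimple classes.} For $g=aJ\in\mathcal{N}_a$, we compute the centralizer exactly. Commuting with $aJ$ is equivalent to commuting with $J$, and the same calculation as in the proof of Lemma~\ref{lem:center-of-S} shows that $C_G(g)=\{\dm{x&y\\0&x}: x\in\F_p^\times,\ y\in\F_p\}$. The determinants of these matrices are $x^2$, so $\det C_G(g)=(\F_p^\times)^2$, which is a proper subgroup of $\F_p^\times$ because $p$ is odd.

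Every step is elementary, and none is a serious obstacle. The only point requiring care is the $\mathcal{E}_\tau$ case, where one must justify the identification of $C_G(g)$ with $\F_{p^2}^\times$, or at least the containment $\F_{p^2}^\times\subseteq C_G(g)$, which is all that is needed. One must also invoke surjectivity of the norm. If one prefers to avoid the field identification, an alternative is to observe that $xI+yg$ commutes with $g$ and has determinant given by a binary quadratic form. Any nondegenerate binary quadratic form over $\F_p$ represents every nonzero element, which again gives $\det C_G(g)=\F_p^\times$.
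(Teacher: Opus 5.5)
Your proposal is correct and follows essentially the same case-by-case argument as the paper: all of $G$ for $\mathcal{D}_a$, diagonal matrices for $\mathcal{D}_{a,b}$, $\F_p[g]^\times\cong\F_{p^2}^\times$ with surjectivity of the norm for $\mathcal{E}_\tau$, and the explicit centralizer $\{\dm{x&y\\0&x}\}$ with determinants $x^2$ for $\mathcal{N}_a$. You also state explicitly that the $\mathcal{N}_a$ case needs the \emph{full} centralizer, not just some centralizing elements; the paper leaves this point implicit.
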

\begin{proof}
It is immediately clear that elements of $\mathcal{D}_a$ are centralized by all of $G$. A direct computation shows that elements of $\mathcal{D}_{a,b}$ are centralized by the diagonal matrices $\left( \begin{smallmatrix} c & 0\\ 0 & d \end{smallmatrix}\right)$ with $c,d \in \F_p^\times$. In both of these cases, the determinant of the centralizer is all of $\F_p^\times$.

Let $g \in \mathcal{E}_\tau$ for some $\tau \in \F_{p^2}^\times \setminus \F_p$. Thus $g$ has irreducible characteristic polynomial $\mathrm{char}(g)=x^2-\tr(g)x+\det(g)$ and is similar to the matrix $\left( \begin{smallmatrix} 0 & -\det(g)\\ 1 & \tr(g) \end{smallmatrix}\right)$. Now a direct computation shows that this matrix is centralized by matrices of the form $aI+bg$ for $a,b \in \F_p$, hence $C_G(g)\simeq \F_p[g]^\times \simeq \F_{p^2}^\times$. On the other hand, a matrix of the form $aI+bg$ has eigenvalues $a+b\tau$ and $a+b \tau'$, so 
\begin{align*} 
\det(aI+bg)&=(a+b\tau)(a+b\tau')\\
&=a^2 + ab(\tau + \tau') + b^2 \tau \tau' \\
&=a^2 + ab \tr(g) + b^2 \det(g). 
\end{align*}
But identifying $\F_{p^2} \simeq \F_p[\tau]$, we see that we may identify the determinant map with the field norm $\F_{p^2} \to \F_p$, which is surjective. So once again the determinant of the centralizer is all of $\F_p^\times$.

For the remaining case, a direct computation shows that elements of $\mathcal{N}_a$ are centralized by matrices of the form $\left( \begin{smallmatrix} b & c\\ 0 & b \end{smallmatrix}\right)$ with $b \in \F_p^\times$ and $c \in \F_p$, which have determinant $b^2$.
\end{proof}

For each $t \in \F_p^\times$, define $N_t$ to be the number of $G$-conjugacy classes with fixed determinant $t$. 

\begin{proposition}\label{prop:number-classes-fixed-determinant}
The value of $N_t$ is given by
\[
N_t = \begin{cases}
p+2, & \text{if } \chi(t)=1\\
p, & \text{if } \chi(t)=-1\\
\end{cases}.
\]
\end{proposition}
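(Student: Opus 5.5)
We count, for a fixed $t\in\F_p^\times$, how many $G$-conjugacy classes from the list in Section~\ref{subsec:conjugacy-classes} consist of matrices of determinant $t$. Since the determinant is constant on each class and is read off from the eigenvalues, this is purely a count over the four families. We then add the contributions and split according to $\chi(t)$.

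\emph{Scalar and unipotent-type classes.} A class $\mathcal{D}_a$ has determinant $a^2$. So the number of such classes with determinant $t$ is the number of square roots of $t$ in $\F_p^\times$: this is $2$ if $\chi(t)=1$ and $0$ if $\chi(t)=-1$. The classes $\mathcal{N}_a$ also have determinant $a^2$, so they contribute the same count, $2$ or $0$.

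\emph{Split semisimple classes.} A class $\mathcal{D}_{a,b}$ is indexed by an unordered pair $\{a,b\}$ with $a\neq b$ and $ab=t$. Ordered pairs $(a,t/a)$ are in bijection with $a\in\F_p^\times$, giving $p-1$ of them. We remove those with $a=t/a$, i.e.\ $a^2=t$; there are $2$ if $\chi(t)=1$ and $0$ otherwise. Dividing by $2$ to pass to unordered pairs gives $(p-3)/2$ classes if $\chi(t)=1$ and $(p-1)/2$ if $\chi(t)=-1$.

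\emph{Elliptic classes.} A class $\mathcal{E}_\tau$ is indexed by a Galois orbit $\{\tau,\tau^p\}$ with $\tau\in\F_{p^2}\setminus\F_p$, and its determinant is $N(\tau)=\tau^{p+1}$. The norm $\F_{p^2}^\times\to\F_p^\times$ is a surjective homomorphism, so each fiber has $p+1$ elements. We discard those $\tau$ in the fiber that lie in $\F_p$; these satisfy $\tau^2=t$, so there are $2$ or $0$ of them. The remaining elements pair up into orbits, giving $(p+1-2)/2=(p-1)/2$ classes if $\chi(t)=1$ and $(p+1)/2$ if $\chi(t)=-1$.

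Summing the four contributions:
\begin{itemize}
\item if $\chi(t)=1$: $2+2+\tfrac{p-3}{2}+\tfrac{p-1}{2}=p+2$;
\item if $\chi(t)=-1$: $0+0+\tfrac{p-1}{2}+\tfrac{p+1}{2}=p$.
\end{itemize}
As a check, summing over $t$ gives $\tfrac{p-1}{2}(p+2)+\tfrac{p-1}{2}p=p^2-1$, the total number of conjugacy classes. The only point needing care is the elliptic fiber count, specifically that the norm map has uniform fibers of size $p+1$. This follows from surjectivity of the norm, which was already used in Lemma~\ref{lem:conjugacy-class-centralizers}.
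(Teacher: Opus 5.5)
Your proof is correct and follows essentially the same route as the paper. Both arguments count, type by type, the classes $\mathcal{D}_a$, $\mathcal{N}_a$, $\mathcal{D}_{a,b}$, $\mathcal{E}_\tau$ of determinant $t$, and both use the uniform fibers of size $p+1$ of the norm $\F_{p^2}^\times\to\F_p^\times$ for the elliptic classes; your final check that the counts sum to $p^2-1$ over all $t$ is a consistency check the paper does not make explicit.
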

\begin{proof}

First suppose $\chi(t)=-1$, so $t \notin (\F_p^\times)^2$. From the descriptions above, we see that there are no contributions from the conjugacy classes of type $\mathcal{D}_a$ or $\mathcal{N}_a$, since those consist entirely of matrices with square determinants. On the other hand, for each $a \in \F_p^\times$, we have a contribution from $\mathcal{D}_{a,ta^{-1}}$. As noted earlier, we have $\mathcal{D}_{a,ta^{-1}}=\mathcal{D}_{ta^{-1},a}$, so we obtain a clean double-count unless we ever have $a=ta^{-1}$, or equivalently $t=a^2$. Since $\chi(t)=-1$, this never happens, so these conjugacy classes contribute $\frac{p-1}{2}$ to our count. Similarly, $\mathcal{E}_\tau$ contributes a class whenever $\tau \in \F_{p^2}\setminus \F_p$ satisfies $N(\tau)=t$, where $N \colon \F_{p^2}^\times \to \F_p^\times$ denotes the norm map. Since the norm map for this quadratic extension is surjective with fibers of size $p+1$, we get $\frac{p+1}{2}$ contributions after removing double counts.

Now suppose $\chi(t)=1$, so there exists $a \in \F_p^\times$ with $a^2=t$. From the description above, we see that $\mathcal{D}_{a}$ and $\mathcal{D}_{-a}$ each contribute a class, as do $\mathcal{N}_{a}$ and $\mathcal{N}_{-a}$. On the other hand, since $a^2=t$, a repeated eigenvalue contributes classes of type $\mathcal{D}_\bullet$ and $\mathcal{N}_\bullet$, but not of type
$\mathcal{D}_{\bullet,\bullet}$, so we now obtain only $\frac{p-3}{2}$ classes of type $\mathcal{D}_{\bullet,\bullet}$. Similarly, since two elements of the norm fiber above $t$ actually belong to $\F_p^\times$ (namely $\pm a$), we only obtain $\frac{p-1}{2}$ classes of type $\mathcal{E}_\bullet$ in this case. Thus, compared with the nonsquare case, the square case gains two
$\mathcal{D}_\bullet$ and two $\mathcal{N}_\bullet$ classes while losing one class each of type $\mathcal{D}_{\bullet,\bullet}$ and $\mathcal{E}_\bullet$, for a total of $2+2+\frac{p-3}{2}+\frac{p-1}{2}=p+2$.

\end{proof}

\subsubsection{Conjugacy classes of $\Gamma_\mathrm{id}$ and $\Gamma_\chi$}\label{subsubsec:conjugacy-classes-of-gamma-id-and-chi}


\begin{proposition}\label{prop:conjugacy-classes-of-Gamma-subgroups} We have isomorphisms
\[
\phi_\mathrm{id} \colon G \xrightarrow{\sim} \Gamma_\mathrm{id}, \quad g \mapsto (g,g)
\]
and 
\[
\phi_\chi \colon G \xrightarrow{\sim} \Gamma_\chi, \quad g \mapsto (g,\chi(\det g)g)
\]
which preserve conjugacy classes. 
\end{proposition}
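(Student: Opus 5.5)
The plan is to check three things directly: that $\phi_\mathrm{id}$ and $\phi_\chi$ are homomorphisms, that they are bijections onto $\Gamma_\mathrm{id}$ and $\Gamma_\chi$, and that they carry $G$-conjugacy classes exactly onto conjugacy classes of the target groups. Here ``preserve conjugacy classes'' is read intrinsically: $x,y\in G$ are conjugate in $G$ if and only if $\phi(x),\phi(y)$ are conjugate in $\Gamma$.

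For $\phi_\mathrm{id}$ everything is immediate. It is the diagonal embedding, so it is an injective homomorphism whose image is $\Gamma_\mathrm{id}$ by definition.

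For $\phi_\chi$, write $\phi_\chi=(\mathrm{id},r_\chi)$, where $r_\chi(g)=\chi(\det g)g$ as in Lemma~\ref{lem:det-preserving}.
\begin{itemize}
\item $r_\chi$ is a homomorphism because $\chi\circ\det$ is a character and scalars are central:
\[
r_\chi(gh)=\chi(\det g)\chi(\det h)\,gh=r_\chi(g)r_\chi(h).
\]
Hence $\phi_\chi$ is a homomorphism.
\item $\phi_\chi$ is injective because its first coordinate is the identity.
\item Its image is $\Gamma_\chi$ by definition, so $\phi_\chi\colon G\to\Gamma_\chi$ is an isomorphism.
\end{itemize}
Note also that $r_\chi$ is an automorphism of $G$: it is injective, and $\det r_\chi(g)=\chi(\det g)^2\det g=\det g$.

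Since any group isomorphism $\phi\colon G\to\Gamma$ satisfies $\phi(aga^{-1})=\phi(a)\phi(g)\phi(a)^{-1}$, and every element of $\Gamma$ has the form $\phi(a)$, conjugacy classes of $G$ map bijectively to conjugacy classes of $\Gamma$. This completes the intrinsic statement.

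It is also worth recording the concrete description used later for witness ratios. The conjugacy class of $(g,\chi(\det g)g)$ in $\Gamma_\chi$ is
\[
\{(aga^{-1},\chi(\det g)\,aga^{-1}) : a\in G\},
\]
because $\chi(\det g)$ is a scalar and is conjugation-invariant. So the class has the same size as the $G$-class of $g$.

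The main subtle point is the one above: being clear that conjugation inside $\Gamma_\chi$, not inside $G\times G$, is meant. Everything else is routine.

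Trace-difference then reads off cleanly from this description. For $\Gamma_\mathrm{id}$, no class is trace-different. For $\Gamma_\chi$, the class of $g$ is trace-different exactly when $\chi(\det g)=-1$ and $\tr g\neq 0$. That is the fact the subsequent witness-ratio computation will use, and it is consistent with the proposition.
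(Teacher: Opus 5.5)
Your proof is correct and is essentially the paper's argument. The paper also treats the isomorphism claim as immediate and then checks directly that $\phi(h)\phi(g)\phi(h)^{-1}=\phi(hgh^{-1})$, which is exactly the homomorphism property you invoke abstractly (and which you verify explicitly for $r_\chi$).
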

\begin{proof}
That these maps are isomorphisms is immediate, so we check the claim about conjugacy classes. This is a direct computation: for $g,h \in G$ we have 
\[
(h,h)(g,g)(h,h)^{-1}=(hgh^{-1},hgh^{-1}) = \phi_\mathrm{id}(hgh^{-1}).
\]
and 
\[
(h,\chi(\det h)h)(g, \chi(\det g)g)(h,\chi(\det h)h)^{-1} = (hgh^{-1},\chi(\det hgh^{-1}) hgh^{-1}) = \phi_\chi(hgh^{-1}).
\]
This shows that the maps respect conjugacy classes, as claimed.
\end{proof}


\subsubsection{Conjugacy classes of $H_\pm$}\label{subsubsec:conjugacy-classes-of-H-pm}

\begin{proposition}\label{prop:conjugacy-classes-of-Hpm} We have an isomorphism
\[
\phi_\pm \colon H_\pm \xrightarrow{\sim} G \times \{\pm 1\}, \quad (g, \epsilon g) \mapsto (g,\epsilon)
\]
of direct products. 
\end{proposition}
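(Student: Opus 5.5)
The plan is to check three things: that $\phi_\pm$ is well defined, that it is a bijection, and that it is a homomorphism. Once these hold, $\phi_\pm$ is an isomorphism onto the direct product $G\times\{\pm1\}$.

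\textbf{Well-definedness.} The key point is that each element of $H_\pm$ has a unique expression $(g,\epsilon g)$ with $g\in G$ and $\epsilon\in\{\pm1\}$. The first coordinate determines $g$. If $(g,g)=(g,-g)$, then $g=-g$, so $2g=0$. Since $p$ is odd and $g$ is invertible, this is impossible. Hence $\epsilon$ is determined as well, and $\phi_\pm$ is a well-defined map of sets.

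\textbf{Bijectivity.} Define the inverse $\psi\colon G\times\{\pm1\}\to H_\pm$ by $\psi(g,\epsilon)=(g,\epsilon g)$. Its image lies in $H_\pm$ by definition of $H_\pm$. The two compositions $\psi\circ\phi_\pm$ and $\phi_\pm\circ\psi$ are visibly the identity. This also gives $|H_\pm|=2|G|$, which matches the earlier statement that $\Gamma_{\mathrm{id}}$ has index $2$ in $H_\pm$.

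\textbf{Homomorphism.} For $g,h\in G$ and $\epsilon,\delta\in\{\pm1\}$, compute
\[
(g,\epsilon g)(h,\delta h)=(gh,\epsilon\delta\, gh).
\]
This uses only that the scalars $\pm I$ lie in the center $Z$ and so commute with $g$. Applying $\phi_\pm$ gives $(gh,\epsilon\delta)=(g,\epsilon)(h,\delta)$, so $\phi_\pm$ is multiplicative. The same computation shows that $H_\pm$ is closed under products, so $H_\pm$ is indeed a subgroup.

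\textbf{Consequence for conjugacy classes.} Since $\{\pm1\}$ is abelian, conjugacy classes of $G\times\{\pm1\}$ are exactly the products $C\times\{\epsilon\}$, where $C$ is a conjugacy class of $G$. Pulling back along $\phi_\pm$, the conjugacy classes of $H_\pm$ are the sets $\{(g,\epsilon g): g\in C\}$. These are what the witness-ratio computation will need.

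\textbf{Main obstacle.} None of the steps is difficult. The only point that needs care is the uniqueness of $\epsilon$, which is exactly where the hypothesis that $p$ is odd is used.
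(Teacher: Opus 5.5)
Your proposal is correct and is exactly the ``straightforward computation'' the paper leaves to the reader. You check well-definedness (using $g \neq -g$ since $p$ is odd), give the explicit inverse $(g,\epsilon)\mapsto(g,\epsilon g)$, derive multiplicativity from the centrality of $\pm I$, and your conjugacy-class consequence matches the paper's follow-up remark that the classes of $H_\pm$ correspond to $C\times\{1\}$ and $C\times\{-1\}$.
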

\begin{proof}
This is a straightforward computation.
\end{proof}

Thus the conjugacy classes of $H_\pm$ are of the form $C \times \{1\}$ and $C \times \{-1\}$, where $C$ is a conjugacy class of $G$, for a total of $2(p^2-1)$ conjugacy classes in $H_\pm.$ This two-to-one correspondence between conjugacy classes of $H_\pm$ and $G$ preserves the size.

\subsubsection{Conjugacy classes of $\Delta$}\label{subsubsec:conjugacy-classes-of-Delta}

Let us set some additional notation. Given $g \in G$, let $$\mathcal{C}_g = \{ hgh^{-1} \ | \ h \in G\}$$ denote the orbit of $g$ under conjugation. Given $(g,h) \in \Delta$, we have a $(G \times G)-$conjugacy class $\mathcal{C}_g \times \mathcal{C}_h$, and the next theorem describes the $\Delta$-conjugacy classes in  $(\mathcal{C}_g \times \mathcal{C}_h) \cap \Delta$.

\begin{theorem}\label{thm:conjugacy-classes-of-Delta}
Let $(g,h)\in \Delta$. 
\begin{enumerate}
\item If at least one of $\mathcal{C}_g,\mathcal{C}_h$ is of type different from $\mathcal{N}_a$, then $(\C_g \times \C_h)\cap \Delta$ contains exactly one conjugacy class. Thus $(g,h)$ and $(g',h')$ are $\Delta$-conjugate if and only if $g$ is $G$-conjugate to $g'$ and $h$ is $G$-conjugate to $h'$.

\item If both $\mathcal{C}_g$ and $\mathcal{C}_h$ are of type $\mathcal{N}_\bullet$, then $(\C_g \times \C_h)\cap \Delta$ contains exactly two conjugacy classes with representatives 
\[
(\lambda J, \pm \lambda J) \quad \text{and} \quad (\lambda J, \pm \lambda J_\epsilon)
\]
for some $\lambda \in \F_p^\times.$
\end{enumerate}
\end{theorem}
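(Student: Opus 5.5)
The plan is to study the conjugation action of $\Delta$ on the $(G\times G)$-orbit $\C_g\times\C_h$ and count orbits using stabilizers. The group $G\times G$ acts transitively on $\C_g\times\C_h$, with stabilizer of $(g,h)$ equal to $C_G(g)\times C_G(h)$. Since $\Delta$ is the kernel of the surjective homomorphism $(x,y)\mapsto \det x\,(\det y)^{-1}$ from $G\times G$ onto $\F_p^\times$, and it is normal with abelian quotient, the $\Delta$-orbits on $\C_g\times\C_h$ are permuted transitively by $G\times G$. Their number is the index
\[
[G\times G : \Delta\cdot(C_G(g)\times C_G(h))] = [\F_p^\times : \det C_G(g)\cdot(\det C_G(h))^{-1}].
\]
First I will verify that $(\C_g\times\C_h)\cap\Delta=\C_g\times\C_h$. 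This holds because determinant is a class function and $\det g=\det h$. So the $\Delta$-classes inside it are exactly these $\Delta$-orbits.

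Next I will apply Lemma~\ref{lem:conjugacy-class-centralizers}. If either $g$ or $h$ is not of type $\mathcal{N}_\bullet$, then one of $\det C_G(g)$, $\det C_G(h)$ is all of $\F_p^\times$. The product subgroup is then $\F_p^\times$, the index is $1$, and there is a single $\Delta$-orbit. This gives (1): $(g,h)$ and $(g',h')$ are $\Delta$-conjugate precisely when they lie in the same $\C_g\times\C_h$. If both are of type $\mathcal{N}_\bullet$, both determinant images equal $(\F_p^\times)^2$. The index is then $[\F_p^\times:(\F_p^\times)^2]=2$, which gives exactly two classes.

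Finally I will identify representatives in case (2). Since $\det g=\det h$ and both elements are non-semisimple with a single eigenvalue, we have $g\in\mathcal{N}_\lambda$ and $h\in\mathcal{N}_{\pm\lambda}$. After $G\times G$-conjugation I may take $g=\lambda J$, and $h$ is conjugate to $\pm\lambda J$. It remains to show that $(\lambda J,\pm\lambda J)$ and $(\lambda J,\pm\lambda J_\epsilon)$ lie in different $\Delta$-orbits. The element $\mathrm{diag}(\epsilon,1)$ conjugates $J$ to $J_\epsilon$ and has nonsquare determinant. So the pair $(1,\mathrm{diag}(\epsilon,1))$ maps one representative to the other, and its determinant ratio $\epsilon^{-1}$ lies outside $(\F_p^\times)^2=\det C_G(g)\det C_G(h)^{-1}$. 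By the stabilizer description, these two points therefore lie in distinct $\Delta$-orbits.

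The main obstacle is the orbit-counting step. One must justify carefully that the $\Delta$-orbits on a transitive $G\times G$-set correspond to the double cosets $\Delta\backslash (G\times G)/\mathrm{Stab}$. Since $\Delta$ is normal, this double coset space is simply $(G\times G)/(\Delta\cdot\mathrm{Stab})$. The argument also needs the image of $C_G(g)\times C_G(h)$ under the determinant-ratio map to be $\det C_G(g)\cdot\det C_G(h)^{-1}$. Once this is written out, the rest reduces to Lemma~\ref{lem:conjugacy-class-centralizers}.
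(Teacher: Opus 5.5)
Your proposal is correct and follows essentially the same route as the paper. The paper's map $\pi$, sending $(aga^{-1},bhb^{-1})$ to $\det a\,(\det b)^{-1}$ modulo $D_1D_2$, is exactly your identification of the $\Delta$-orbits with cosets of $\Delta\cdot(C_G(g)\times C_G(h))$. The representatives in case (2) are then separated, just as you do, by a conjugator $J\mapsto J_\epsilon$ of nonsquare determinant, and your double-coset phrasing makes the paper's fiber argument slightly more transparent.
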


\begin{proof}
Since $\det$ is a class function and $\det g=\det h$ (since $(g,h) \in \Delta$ by assumption), the full
$(G\times G)$-class $X=\mathcal{C}_g\times\mathcal{C}_h$ of $(g,h)$
is contained in $\Delta$, and is a union of $\Delta$-conjugacy classes. 

Every element of $X$ has the form $(g',h')=(aga^{-1},bhb^{-1})$, and the question is whether the conjugating pair $(a,b) \in G \times G$ can
be chosen with $\det a=\det b$. Note that conjugation by $(a,b)$ is determined up to right multiplication by an element of $C_G(g) \times C_G(h)$, since (for instance) if $z\in C_G(g)$ then
\[
(az)g(az)^{-1}=azgz^{-1}a^{-1}=agzz^{-1}a^{-1} = aga^{-1},
\]
and conversely we have 
\[
aga^{-1}=bgb^{-1} \Longrightarrow b^{-1}a \in C_G(g).
\]
Write $D_1=\det C_G(g)$ and $D_2=\det C_G(h)$, in the notation of Lemma~\ref{lem:conjugacy-class-centralizers}. Thus $(g',h')$ is a $\Delta$-conjugate of $(g,h)$ precisely when $\det(a)\det(b)^{-1} \in D_1D_2$.

Let us then consider the map 
\[
\pi \colon X \to \F_p^\times / D_1D_2, \qquad (aga^{-1},bhb^{-1}) \mapsto \det(a)\det(b)^{-1} \mod D_1 D_2.
\]
Note that we may also abuse notation and just refer to $\pi(a,b)$ in terms of the conjugating element. 

It is clear from the preceding discussion that this map is well-defined, and it is surjective since we may allow $a,b$ to range over all of $G$. Note that if $(u,v) \in \Delta$, then $$\pi(au,bv)=\det(au)\det(bv)^{-1}=\det(a)\det(b)^{-1}\det(u)\det(v^{-1})=\det(a)\det(b)^{-1}.$$ 
This shows that $\pi$ is constant on $\Delta$-orbits, and conversely, different $\Delta$-orbits correspond to different values under $\pi$. Thus the fibers of $\pi$ are precisely the $\Delta$-orbits, and the number of fibers is given by the index $[\F_p^\times \colon D_1D_2]$.  

By Lemma~\ref{lem:conjugacy-class-centralizers}, this index is $1$ in every case \textit{except} for when $D_1D_2=(\F_p^\times)^2$, which occurs precisely when $\C_g$ and $\C_h$ are each of type $\mathcal{N}_\bullet$. In this case the index is $2$, and we may describe the fibers explicitly. We may assume $g=\lambda J$ for some $\lambda \in \F_p^\times$, and then since $\det(g)=\det(h)$ and $h \in \mathcal{N}_a$ for some $a$, we have either $h=\pm \lambda J$ or $h= \pm \lambda J_\epsilon$. While $J$ and $J_\epsilon$ are $G$-conjugate, we will show that the $\pi$-fibers split along these two representatives. To that end, suppose $b \in G$ such that $bJb^{-1}=J_\epsilon$. A direct computation shows that $b$ is of the form $b=\left( \begin{smallmatrix}\alpha & \ast  \\ 0 & \alpha \epsilon^{-1}  \end{smallmatrix}\right)$, hence it has nonsquare determinant. Thus $(\lambda J, \pm \lambda J)$ is conjugated to $(\lambda J, \pm \lambda J_\epsilon)$ by the pair $(1,b)$, and $\pi(1,b)=\det(b)^{-1} \notin (\F_p^\times)^2$. This shows that $(\lambda J,\pm \lambda J)$ and $(\lambda J,\pm \lambda J_\epsilon)$ are representatives for the two distinct $\Delta$-conjugacy classes into which $\C_g \times \C_h$ splits. 
\end{proof}

\begin{remark}\label{rmk:tr-tr-det-doesnt-distinguish-delta-classes}
We note that the matrices $\pm \lambda J$ and $\pm \lambda J_\epsilon$ have the same characteristic polynomial, and so the data $(\tr(g),\tr(h),\det(h))$ is not enough to distinguish the two classes arising in that case.  
\end{remark}

\begin{proposition}\label{prop:number-of-delta-conjugacy-classes}
The total number of $\Delta$-conjugacy classes is $(p-1)(p^2+2p+4).$
\end{proposition}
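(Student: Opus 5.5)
The plan is to count $\Delta$-conjugacy classes by grouping them according to the $(G\times G)$-class $\C_g\times\C_h$ that contains them, and then to apply Theorem~\ref{thm:conjugacy-classes-of-Delta}. Every element of $\Delta$ lies in exactly one set $\C_g\times\C_h$ with $\det g=\det h$. Conversely, every such set lies entirely inside $\Delta$ and is a union of $\Delta$-classes, as observed at the start of the proof of Theorem~\ref{thm:conjugacy-classes-of-Delta}. So the total count is
\[
\sum_{t\in\F_p^\times}\ \sum_{\substack{(\C,\C')\\ \det\C=\det\C'=t}} m(\C,\C'),
\]
where $\C,\C'$ range over $G$-conjugacy classes and $m(\C,\C')$ is the number of $\Delta$-classes in $\C\times\C'$. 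By Theorem~\ref{thm:conjugacy-classes-of-Delta}, $m(\C,\C')$ equals $1$, unless both $\C$ and $\C'$ are of type $\mathcal{N}_\bullet$, in which case it equals $2$.

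First I count the pairs, ignoring the correction. For a fixed determinant $t$, the number of ordered pairs of classes is $N_t^2$. Proposition~\ref{prop:number-classes-fixed-determinant} gives $N_t=p+2$ for the $\frac{p-1}{2}$ squares $t$ and $N_t=p$ for the $\frac{p-1}{2}$ nonsquares $t$. Hence
\[
\sum_t N_t^2=\tfrac{p-1}{2}\bigl((p+2)^2+p^2\bigr)=(p-1)(p^2+2p+2).
\]

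Next I add one extra class for each ordered pair of $\mathcal{N}_\bullet$-type classes with equal determinant. Every element of $\mathcal{N}_a$ has determinant $a^2$. So for a nonsquare $t$ there are no such classes. For a square $t=a^2$ there are exactly two, namely $\mathcal{N}_a$ and $\mathcal{N}_{-a}$, which are distinct because $p$ is odd. This gives $4$ ordered pairs for each square $t$, hence an extra $4\cdot\frac{p-1}{2}=2(p-1)$ classes in total. Adding the two contributions gives
\[
(p-1)(p^2+2p+2)+2(p-1)=(p-1)(p^2+2p+4),
\]
which is the claimed count.

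The only step needing care is the bookkeeping. I must count ordered pairs, since $(\C,\C')$ and $(\C',\C)$ give different subsets of $G\times G$. I must also identify exactly which $\mathcal{N}_\bullet$ classes have a given determinant. Both points follow directly from the explicit class descriptions in Section~\ref{subsec:conjugacy-classes}, so I do not expect a real obstacle here.
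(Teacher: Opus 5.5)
Your proof is correct and takes essentially the same route as the paper. The paper likewise sums, over each determinant $t$, the $N_t^2$ ordered pairs of $G$-classes plus $n_t^2=4$ extra classes for each square $t$, coming from the split $\mathcal{N}_\bullet\times\mathcal{N}_\bullet$ pairs, using Theorem~\ref{thm:conjugacy-classes-of-Delta} and Proposition~\ref{prop:number-classes-fixed-determinant}; separating the base count $(p-1)(p^2+2p+2)$ from the correction $2(p-1)$ is only a regrouping of that same sum.
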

\begin{proof}
Fix $t \in \F_p^\times.$ Recall our notation $N_t$ from Proposition~\ref{prop:number-classes-fixed-determinant}, which counts the number of $G$-conjugacy classes with determinant $t$. 

From the previous theorem, the $\Delta$-conjugacy classes are almost entirely made up of pairs of $G$-conjugacy classes, \textit{except} for when we have the splitting of e.g. $\mathcal{N}_\lambda \times \mathcal{N}_\lambda$ into two classes. Since every class of type $\mathcal{N}_\bullet$ has square determinant, let us define
\[
n_t=\begin{cases}
2, & \text{if}\ \chi(t)=1\\ 
0 & \text{if}\ \chi(t)=-1
\end{cases}.
\]
Then we may count the number of $\Delta$-conjugacy classes by summing over the values of $t \in \F_p^\times$, splitting up the computation according to whether $t \in (\F_p^\times)^2$. To be explicit, suppose $\C_1 \times \C_2$ is a $(G \times G)$-conjugacy class with $\det \C_1 = \det \C_2 = t$. If $t \not\in (\F_p^\times)^2$, then there are $N_t$ choices for each of $\C_1$ and $\C_2$, for a contribution of $N_t^2$. On the other hand, if $t \in (\F_p^\times)^2$, then we get an extra $n_t^2=4$ contributions from the pairs where $\C_1, \C_2$ have representatives in each of $J,J_\epsilon.$

Using Proposition~\ref{prop:number-classes-fixed-determinant}, we thus compute
\begin{align*}
\#(\Delta\text{-conjugacy classes}) &= \sum_{t \in \F_p^\times} (N_t^2 + n_t^2) \\ 
&=\sum_{t \in (\F_p^\times)^2} (N_t^2 + n_t^2) + \sum_{t \notin (\F_p^\times)^2} (N_t^2 + n_t^2) \\ 
&= \frac{p-1}{2} \left[ (p+2)^2 + 4 \right]  + \frac{p-1}{2} p^2 \\ 
&= (p-1)(p^2 + 2p + 4)
\end{align*}
as claimed.
\end{proof}

\subsection{Computing the witness ratios}\label{subsec:witness-ratios}

We will now compute the witness ratios for each admissible subgroup. First we need a lemma.

\begin{lemma}\label{lem:Ntd}
Let $$N(t,d)=\#\{g \in G \ | \ \tr(g)=t \quad \text{and} \quad \det(g)=d\}.$$ Then
\[
N(t,d)=\begin{cases}
p^2+p, &\text{if } \chi(t^2-4d)=1,\\
p^2, &\text{if } t^2-4d=0,\\
p^2-p, &\text{if } \chi(t^2-4d)=-1\\
\end{cases}.
\]
In particular, we have 
\[
\#\{g \in G \ | \ \tr(g)=0\} = p^2(p-1)
\]
and
\[
\#\{g \in G \ | \ \tr(g)=0 \text{ and } \chi(\det(g))=-1\} = \begin{cases}
    \frac{1}{2}p(p-1)^2, &\text{if } p \equiv 1 \mod 4\\
    \frac{1}{2}p(p^2-1), &\text{if } p \equiv 3 \mod 4\\
\end{cases}.
\]
\end{lemma}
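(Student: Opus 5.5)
The plan is to count matrices with a given characteristic polynomial $x^2-tx+d$ by sorting them into the $G$-conjugacy classes listed in Section~\ref{subsec:conjugacy-classes}. The discriminant $t^2-4d$ decides which classes occur.

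If $\chi(t^2-4d)=1$, the polynomial has two distinct roots $a\neq b$ in $\F_p^\times$ (they are nonzero since $d\neq 0$). Every matrix with this characteristic polynomial is diagonalizable, so the set is exactly $\mathcal{D}_{a,b}$, of size $p(p+1)=p^2+p$.

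If $t^2-4d=0$, there is a repeated root $a=t/2$. The matrices are then the scalar $aI$ together with the class $\mathcal{N}_a$, giving $1+(p^2-1)=p^2$.

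If $\chi(t^2-4d)=-1$, the roots are $\tau,\tau'\in\F_{p^2}\setminus\F_p$. Any two such matrices are conjugate, since both are conjugate to the companion matrix used in Lemma~\ref{lem:conjugacy-class-centralizers}. So the set is the single class $\mathcal{E}_\tau$, of size $p^2-p$.

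For the first consequence, set $t=0$ and sum over $d\in\F_p^\times$; the relevant discriminant is $-4d$, and $t^2-4d=0$ never happens. As $d$ ranges over $\F_p^\times$, $-4d$ is a square for $(p-1)/2$ values and a nonsquare for the other $(p-1)/2$. The total is therefore $\frac{p-1}{2}\big[(p^2+p)+(p^2-p)\big]=p^2(p-1)$.

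For the second, restrict to nonsquare $d$, that is, $\chi(d)=-1$. Then $\chi(-4d)=\chi(-1)\chi(d)=-\chi(-1)$.
\begin{itemize}
\item If $p\equiv 1 \bmod 4$, then $\chi(-1)=1$, so each of the $(p-1)/2$ values of $d$ lies in the elliptic case. The count is $\frac{p-1}{2}(p^2-p)=\frac12 p(p-1)^2$.
\item If $p\equiv 3\bmod 4$, then $\chi(-1)=-1$, so each value lies in the split case. The count is $\frac{p-1}{2}(p^2+p)=\frac12p(p^2-1)$.
\end{itemize}

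The only step that needs any care is showing that each characteristic-polynomial fiber is exactly one class, or the scalar plus $\mathcal{N}_a$ in the repeated-root case. This follows from rational canonical form: a non-scalar $2\times2$ matrix is conjugate to the companion matrix of its characteristic polynomial. The rest is bookkeeping with $\chi(-1)=(-1)^{(p-1)/2}$.
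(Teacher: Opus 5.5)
Your proposal is correct and follows essentially the same route as the paper. Both identify each characteristic-polynomial fiber with a single $G$-conjugacy class, or with $\mathcal{D}_a\cup\mathcal{N}_a$ in the repeated-root case, read off the class sizes, and then sum over $d$ using $\chi(-4d)=\chi(-1)\chi(d)$. Your appeal to rational canonical form (which the paper simply asserts) and your explicit nonsquare-determinant count (which the paper calls a ``similar straightforward computation'') are welcome details but do not change the argument.
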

\begin{proof}
The value of $N(t,d)$ is equivalent to the number of elements of $G$ with characteristic polynomial $x^2-tx+d$, which has discriminant $\delta=t^2-4d$.

If $\chi(\delta)=1$, then $g$ has two distinct eigenvalues, so the contributions to $N(t,d)$ come from a single conjugacy class of type $\mathcal{D}_{a,b}$, and each such class has $p^2+p$ elements.

If $\chi(\delta)=-1,$ then the characteristic polynomial is irreducible, so the contributions to $N(t,d)$ come from a single conjugacy class of type $\mathcal{E}_\tau$, and each such class has $p^2-p$ elements.

If $\delta=0$, then $g$ has a repeated eigenvalue. In this case, $N(t,d)$ gets a contribution of size $p^2-1$ from a class of type $\mathcal{N}_a$ and a contribution of size $1$ from a class of type $\mathcal{D}_a$, for a total of $p^2$ such matrices.

We may now compute
\begin{align*}
\#\{g \in G \ | \ \tr(g)=0\} &= \sum_{d \in \F_p^\times} N(0,d) \\ 
&= \sum_{\substack{d \in \F_p^\times\\ \chi(-d)=1}} (p^2 +p)\ + \sum_{\substack{d \in \F_p^\times\\ \chi(-d)=-1}} (p^2 -p)\\
&=\frac{p-1}{2}(p^2+p) + \frac{p-1}{2}(p^2-p)\\ 
&=p^2(p-1).
\end{align*}
The proof of the final claim is a similar straightforward computation, noting that $\chi(-1)=1$ if and only if $p \equiv 1 \mod 4.$
\end{proof}

\begin{theorem}\label{thm:witness-ratios-general-p}
Let $p \geq 5$. Then the witness ratios for the admissible subgroups are
\begin{align*}
W(\Gamma_\mathrm{id})=0, \quad W(\Gamma_\chi)=\frac{1}{2} - \frac{1}{2(p+\chi(-1))}, \quad
W(H_\pm)=\frac{p^2-p-1}{2(p^2-1)}, \quad 
W(\Delta)=1-\frac{p^3-p-1}{(p^2-1)^2}.
\end{align*}
\end{theorem}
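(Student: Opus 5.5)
The plan is to compute each ratio directly from Definition~\ref{def:witness-ratio} by counting elements of $H$, not conjugacy classes. The structural descriptions in Propositions~\ref{prop:conjugacy-classes-of-Gamma-subgroups} and~\ref{prop:conjugacy-classes-of-Hpm} reduce everything to counting matrices in $G$ with prescribed trace and determinant data, which Lemma~\ref{lem:Ntd} supplies. The case $\Gamma_{\mathrm{id}}$ is immediate, since every element $(g,g)$ has equal traces, so $W(\Gamma_{\mathrm{id}})=0$.

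For $\Gamma_\chi$, an element $(g,\chi(\det g)g)$ is trace-different exactly when $\chi(\det g)=-1$ and $\tr g\neq 0$. Since $\F_p^\times$ is cyclic of even order, half of $G$ has nonsquare determinant. Hence
\[
W(\Gamma_\chi)=\frac12-\frac{\#\{g: \tr g=0,\ \chi(\det g)=-1\}}{|G|}.
\]
I then insert the two cases of the last formula in Lemma~\ref{lem:Ntd}. When $p\equiv1\bmod 4$ the correction is $\frac{1}{2(p+1)}$, and when $p\equiv 3\bmod 4$ it is $\frac{1}{2(p-1)}$. Both cases combine as $\frac{1}{2(p+\chi(-1))}$.

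For $H_\pm$, Proposition~\ref{prop:conjugacy-classes-of-Hpm} shows that $H_\pm$ is exactly the set of pairs $(g,\pm g)$ with $g\in G$ and either sign. Such a pair is trace-different iff the sign is $-1$ and $\tr g\neq0$. Lemma~\ref{lem:Ntd} gives the proportion of trace-zero elements as $p^2(p-1)/|G|=p/(p^2-1)$. Therefore
\[
W(H_\pm)=\tfrac12\Bigl(1-\tfrac{p}{p^2-1}\Bigr)=\frac{p^2-p-1}{2(p^2-1)}.
\]

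For $\Delta$, I compute the complement: the number of trace-equal pairs is $\sum_{d\in\F_p^\times}\sum_{t\in\F_p}N(t,d)^2$, and I divide by $|\Delta|=|G|^2/(p-1)$. For fixed $d$, let $s_d$, $z_d$, $n_d$ be the numbers of $t$ with $t^2-4d$ a nonzero square, zero, or a nonsquare, respectively. Then $z_d=1+\chi(d)$ and $s_d+n_d=p-z_d$. The standard character sum $\sum_{t}\chi(t^2-4d)=-1$ (valid since $d\neq0$) gives $s_d-n_d=-1$, which determines $s_d$ and $n_d$ in terms of $\chi(d)$. The inner sum becomes $s_d(p^2+p)^2+z_dp^4+n_d(p^2-p)^2$. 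Summing over $d$, the $\chi(d)$-dependence cancels because squares and nonsquares are equinumerous. After dividing by $p^2(p-1)(p^2-1)^2$, the result should simplify to $\frac{p^3-p-1}{(p^2-1)^2}$. I expect this $\Delta$ computation to be the main obstacle: the character-sum identity and the final algebraic simplification are where errors are likely, so I will check the end result numerically at $p=5$.
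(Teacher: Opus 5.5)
Your proposal is correct and follows the paper's proof essentially step for step: you use the same element counts for $\Gamma_\chi$ and $H_\pm$ via Lemma~\ref{lem:Ntd}, and the same complement count $\frac{1}{|\Delta|}\sum_{d}\sum_{t}N(t,d)^2$ for $\Delta$. The only variation is that you count the $t$ with $\chi(t^2-4d)=\pm1$ using the character sum $\sum_t\chi(t^2-4d)=-1$, where the paper uses the factorization $(t-a)(t+a)=4d$; your version treats square and nonsquare $d$ uniformly, giving inner sum $p^2(p^3-p-z_d)$, so the cancellation over $d$ is immediate.
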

\begin{proof}
\begin{enumerate}
\item Since $\Gamma_\mathrm{id}=\{(g,g) \ | \ g \in G\}$, it is clear that $W(\Gamma_\mathrm{id})=0$.

\item Let us now consider $\Gamma_\chi$, whose elements are of the form $(g_1,g_2)$ with $g_2=\chi(\det g_1)g_1$. Thus
\[
\tr(g_2)=\chi(\det g_1) \tr(g_1),
\]
so $\tr(g_1) \neq \tr(g_2)$ if and only if $\chi(\det g_1) = -1$ and $\tr(g_1) \neq 0$. Thus
\[
W(\Gamma_\chi) = \frac{1}{|G|} \cdot \#\{g \in G \ | \ \tr(g) \neq 0 \quad \text{and} \ \chi(\det g)=-1\}
\]
From Proposition \ref{prop:conjugacy-classes-of-Gamma-subgroups} we know that $\Gamma_\chi \simeq G$, the map $\det \colon G \to \F_p^\times$ is surjective with fibers of size $|S|$, and half of the values in $\F_p^\times$ are squares. So a full $\frac{1}{2}$ of the elements of $\Gamma_\chi$ do not contribute to our count, since they have square determinant. From those that remain, we must discard those with $\tr(g)=0$, so we have
\begin{align*}
W(\Gamma_\chi) &= \frac{1}{2} -  \frac{\#\{g \in G \ | \ \tr(g) = 0 \quad \text{and} \ \chi(\det g)=-1\}}{|G|}.
\end{align*}
The claimed formula now follows from the previous lemma and the fact that $|G|=p(p-1)^2(p+1).$
\item Let us now consider $H_\pm$, whose elements are of the form $(g,\pm g)$. As in the previous case, a full half of the elements do not contribute to our count, namely those of the form $(g,g)$. The only elements which do contribute to our count, then, are those of the form $(g,-g)$ with $\tr(g) \neq 0$. Using the previous lemma, we thus have
\[
W(H_\pm) = \frac{1}{2}\left(1-\frac{\#\{g \in G \ | \ \tr(g)=0\}}{|G|}\right) = \frac{1}{2}\left(1-\frac{p^2(p-1)}{p(p-1)^2(p+1)}\right) = \frac{p^2-p-1}{2(p^2-1)}.
\]
\item Finally, consider $\Delta$, consisting of $\textit{all}$ pairs $(g_1,g_2)$ with $\det(g_1)=\det(g_2)$. Set $d=\det(g_1)$. Then in the notation of Lemma~\ref{lem:Ntd}, there are $N(t,d)$ ways to choose $g_2$ such that $(g_1,g_2) \in \Delta$ and $\tr(g_2)=t$. The only ones which do \textit{not} contribute to the count for $W(\Delta)$ are those for which $t=\tr(g_1)$. Taking into account the symmetry between $g_1$ and $g_2$ in this argument, it follows that 
\[
1-W(\Delta) = \frac{1}{|\Delta|}\sum_{d \in \F_p^\times} \sum_{t \in \F_p} N(t,d)^2. 
\]
We will compute the inner sum for each fixed $d \in \F_p^\times,$ using Lemma~\ref{lem:Ntd}. Thus, for a fixed $t$ and $d$, we consider the characteristic polynomial $x^2-tx+d$ with discriminant $\delta=t^2-4d$. 

Suppose $d \in (\F_p^\times)^2$. Then $\delta=0$ for precisely two values of $t$. Now let's consider for how many values of $t$ we have $\chi(\delta)=1$. For each such $t$, there exists $a \in \F_p^\times$ such that $(t-a)(t+a)=4d$. Equivalently, we seek to count solutions $(u,v)$ to $uv=4d$, and each solution corresponds to $t=\frac{u+v}{2} \in \F_p$ and $a=\frac{v-u}{2}\in \F_p^\times.$ But clearly we are free to choose $u \in \F_p^\times$, and then $v=4du^{-1}$ gives a solution. However, we require $a \in \F_p^\times$, so we must not have $u=v$, or equivalently we must not have $4d=u^2$, and since $\chi(d)=1$ this excludes exactly two values of $u$. So we are left with $p-3$ choices for $u$, but there is a symmetry to account for, since $\pm a$ correspond to the same value of $t$, and so there are a total of $\frac{p-3}{2}$ values of $t$ for which $\chi(\delta)=1.$ 

Finally, every remaining $t$ will result in $\chi(\delta)=-1$. We have accounted for $2+\frac{p-3}{2}=\frac{p+1}{2}$ values in $\F_p$, leaving $\frac{p-1}{2}$ remaining.

In summary, when $\chi(d)=1$, we have
\begin{align*}
\sum_{t \in \F_p}N(t,d)^2 &= \sum_{\substack{t \\ \delta=0}}\left(p^2\right)^2 + \sum_{\substack{t \\ \chi(\delta)=1}}\left(p^2+p\right)^2 + \sum_{\substack{t \\ \chi(\delta)=-1}}\left(p^2-p\right)^2 \\
&= 2\left(p^2\right)^2 + \frac{p-3}{2}\left(p^2+p\right)^2 + \frac{p-1}{2}\left(p^2-p\right)^2 \\
&= p^5 - p^3 -2p^2.
\end{align*}
A similar argument shows that when $\chi(d)=-1$, we have
\[
\sum_{t \in \F_p} N(t,d)^2 =0 \cdot (p^2)^2 + \frac{p-1}{2}(p^2+p)^2 + \frac{p+1}{2}(p^2-p)^2 = p^5-p^3
\]
Putting this all together, we have
\begin{align*}
\sum_{d \in \F_p^\times} \sum_{t \in \F_p} N(t,d)^2 &=\frac{p-1}{2} (p^5-p^3-2p^2)  +   \frac{p-1}{2} (p^5-p^3)\\ 
&=p^2(p-1)(p^3-p-1),
\end{align*}
and so
\begin{align*}
W(\Delta) &= 1 - \frac{1}{|\Delta|}\sum_{d \in \F_p^\times} \sum_{t \in \F_p} N(t,d)^2\\ 
&=1- \frac{(p^3-p-1)}{(p^2-1)^2}.\\
\end{align*}
\end{enumerate}
\end{proof}

\begin{corollary}\label{cor:distinct-ratios}
The witness ratios are distinct for every pair of admissible subgroups. In particular, we have
\[
W(H_\pm)-W(\Gamma_\chi)=\frac{- \chi(-1)}{2(p^2-1)},
\]
while $W(\Delta)-W(H_\pm) \to \frac{1}{2}$ as $p \to \infty$.
\end{corollary}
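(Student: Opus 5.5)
The corollary follows directly from the explicit formulas in Theorem~\ref{thm:witness-ratios-general-p}, so the plan is pure algebra with a few inequalities. First, compute the difference $W(H_\pm)-W(\Gamma_\chi)$. Write $s=\chi(-1)\in\{\pm1\}$, so that $s^2=1$. We have
\[
W(\Gamma_\chi)=\frac{1}{2}-\frac{1}{2(p+s)}=\frac{p+s-1}{2(p+s)}.
\]
Multiplying numerator and denominator by $p-s$ gives denominator $2(p^2-1)$ and numerator $(p+s-1)(p-s)=p^2-p-1+s$. Subtracting this from $W(H_\pm)=\frac{p^2-p-1}{2(p^2-1)}$ then gives $-s/(2(p^2-1))$, as claimed. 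In particular this difference is nonzero, so $W(H_\pm)\neq W(\Gamma_\chi)$.

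Next, for the limit, note that $W(\Delta)=1-\frac{p^3-p-1}{(p^2-1)^2}\to 1$, since the subtracted term is $O(1/p)$. Similarly $W(H_\pm)=\frac{p^2-p-1}{2(p^2-1)}\to\frac12$. Hence $W(\Delta)-W(H_\pm)\to\frac12$.

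For distinctness of all pairs, it remains to separate everything from $W(\Gamma_\mathrm{id})=0$, and $W(\Delta)$ from $W(H_\pm)$ and $W(\Gamma_\chi)$, for every $p\ge5$ rather than only asymptotically. The values $W(\Gamma_\chi)$ and $W(H_\pm)$ are positive: $W(\Gamma_\chi)\ge \frac12-\frac{1}{2(p-1)}>0$ and $p^2-p-1>0$. Both are also strictly less than $\frac12$ up to a term of size $\frac{1}{2(p-1)}$; more simply, both are at most $\frac12$, since $W(\Gamma_\chi)\le\frac12-\frac{1}{2(p+1)}$ and $W(H_\pm)<\frac12$.

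The main (mild) obstacle is to show $W(\Delta)>\frac12$ for all $p\ge5$. This amounts to $2(p^3-p-1)<(p^2-1)^2$, i.e.
\[
p^4-2p^3-2p^2+2p+3>0.
\]
I would check it directly: at $p=5$ the left side is $625-250-50+10+3=338>0$, and for $p\ge5$ one has $p^4-2p^3-2p^2=p^2(p^2-2p-2)>0$, with the remaining terms $2p+3$ positive.

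This separates $W(\Delta)$ from the other three values. Together with the nonvanishing difference computed in the first step and the positivity of $W(\Gamma_\chi)$ and $W(H_\pm)$, all four witness ratios are pairwise distinct.
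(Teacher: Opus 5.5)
Your proposal is correct and follows the same route as the paper, which simply calls these straightforward calculations from Theorem~\ref{thm:witness-ratios-general-p}. Your algebra for $W(H_\pm)-W(\Gamma_\chi)$, the limit, and the chain $0<W(\Gamma_\chi),W(H_\pm)<\frac{1}{2}<W(\Delta)$ (via $p^4-2p^3-2p^2+2p+3>0$ for $p\geq 5$) correctly supply the details the paper leaves implicit.
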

\begin{proof}
These are straightforward calculations using Theorem~\ref{thm:witness-ratios-general-p}.
\end{proof}

Putting this all together, we obtain the following theorem.

\begin{theorem}\label{thm:main-p-at-least-5}
Let $p \geq5$ be a prime. Suppose $E_1$ and $E_2$ are elliptic curves defined over $\Q$. Suppose further that $\rhobar_{E_1,p}$ and $\rhobar_{E_2,p}$ are surjective. If $\rhobar_{E_1,p} \not\simeq \rhobar_{E_2,p}$, then the image $H$ of the product representation is conjugate to one of $\Gamma_\chi, H_\pm,$ and $\Delta.$ Furthermore, we compute
\[
W(\bullet)=\begin{cases} 
    \frac{1}{2}-\frac{1}{2(p+\chi(-1))}, & \bullet = \Gamma_\chi \\ 
    \frac{p^2-p-1}{2(p^2-1)}, & \bullet = H_\pm \\
    1-\frac{p^3-p-1}{(p^2-1)^2}, & \bullet=\Delta \\
    \end{cases}.
\]
As a consequence, we have
\[
\lim_{X \to \infty} \frac{\#\{\textrm{good primes } \ell < X \ |  \ a_\ell(E_1) \not\equiv a_\ell(E_2) \mod p \}}{\#\{\textrm{good primes } \ell < X\}} = W(H).
\]
\end{theorem}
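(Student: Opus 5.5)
The statement assembles results already proved in this section, so the plan is to combine them and add a Chebotarev argument at the end.

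First, $H=\mathrm{Im}(\rhobar)$ is admissible in the sense of Definition~\ref{def:admissible}. The surjectivity of each $\rhobar_{E_i,p}$ makes both projections surjective. Since $\det\rhobar_{E_i,p}=\chi_{cyc}$ for $i=1,2$, we get $H\le\Delta$.

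By Theorem~\ref{thm:class}, $H$ is $(G\times G)$-conjugate to exactly one of $\Gamma_{\mathrm{id}},\Gamma_\chi,H_\pm,\Delta$. It remains to exclude $\Gamma_{\mathrm{id}}$ under the hypothesis $\rhobar_{E_1,p}\not\simeq\rhobar_{E_2,p}$. Suppose $H=(a,b)\Gamma_{\mathrm{id}}(a,b)^{-1}$. Then every $\sigma\in\GQ$ satisfies
\[
\rhobar_{E_2,p}(\sigma)=ba^{-1}\rhobar_{E_1,p}(\sigma)(ba^{-1})^{-1}.
\]
This says exactly that $\rhobar_{E_1,p}\simeq\rhobar_{E_2,p}$ via $M=ba^{-1}$, a contradiction. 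Hence $H$ is conjugate to one of $\Gamma_\chi,H_\pm,\Delta$.

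The witness ratios are then the values computed in Theorem~\ref{thm:witness-ratios-general-p}. The function $(g_1,g_2)\mapsto[\tr g_1\ne\tr g_2]$ is invariant under conjugation in $G\times G$. Therefore $W(H)$ is unchanged when $H$ is replaced by a conjugate, and we may take $H$ to be the standard representative.

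For the density statement, let $K$ be the fixed field of $\ker\rhobar$, so $\mathrm{Gal}(K/\Q)\simeq H$. For a good prime $\ell\nmid pN_1N_2$, the Frobenius class maps to a conjugacy class of $H$ containing $(g_1,g_2)$ with $\tr g_i\equiv a_\ell(E_i)\bmod p$. Thus $a_\ell(E_1)\not\equiv a_\ell(E_2)\bmod p$ holds exactly when $\Frob_\ell$ lies in a trace-different class. By the Chebotarev Density Theorem applied to each class $C\in\mathcal S$, the set of such primes has natural density $\sum_{C\in\mathcal S}|C|/|H|=W(H)$, using Equation~\eqref{eq:WH-defn}. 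Excluding the finitely many bad primes does not change the limit, so the ratio in the statement converges to $W(H)$.

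There is no substantial obstacle, since the real work is already contained in Theorems~\ref{thm:class} and~\ref{thm:witness-ratios-general-p}. The point needing the most care is the identification of Frobenius traces with traces in $H$. It relies on $\rhobar$ factoring through $\mathrm{Gal}(K/\Q)$ and on the ramification of $K$ being contained in primes dividing $pN_1N_2$, which holds because $K=\Q(E_1[p])\Q(E_2[p])$.
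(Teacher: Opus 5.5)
Your proposal is correct and follows the same route as the paper, which obtains this theorem by combining the classification of admissible subgroups, the witness-ratio computations, and the Chebotarev density theorem. Your explicit exclusion of $\Gamma_{\mathrm{id}}$ via $M=ba^{-1}$, and your remark that $W$ is invariant under $(G\times G)$-conjugation, fill in steps the paper leaves implicit.
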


\begin{remark}
As described in the introduction, and thanks to Corollary~\ref{cor:distinct-ratios}, the results of this paper can often be used to numerically identify the image subgroup associated to a fixed pair of elliptic curves $(E_1,E_2)$ modulo $p$. The major exception, however, is attempting to distinguish between $H_\pm$ and $\Gamma_\chi$ when $p\gg 0$, since their witness ratios only differ by $O(p^{-2})$.

To distinguish between these pairs, we can use the explicit descriptions of $\Gamma_\chi$ and $H_\pm$ from Section~\ref{subsec:classifying-admissible-subgroups}. In particular, every element of $\Gamma_\chi$ is of the form $(g,\chi(\det g)g)$, while for $(g_1,g_2) \in H_\pm$ we have $g_2=\pm g_1$, and the sign is independent of $\chi$. In terms of the data of the elliptic curves, in the $\Gamma_\chi$ case we always have 
\[
a_\ell(E_2) \equiv \left(\frac{\ell}{p}\right)a_\ell(E_1),
\]
while in the $H_\pm$ case we have
\[
a_\ell(E_2) \equiv \epsilon(\ell) a_\ell(E_1), \quad \epsilon(\ell) \in \{\pm 1\}
\]
and in particular there will exist a prime $\ell$ such that $\epsilon(\ell) \neq \left(\frac{\ell}{p}\right)$. In fact, by the Chebotarev density theorem, there must be a positive density of such primes, so in practice one should be able to distinguish between $\Gamma_\chi$ and $H_\pm$ with a small additional search.
\end{remark}

\subsection{Characterizing the elliptic curves pairs which realize each image subgroup}\label{subsec:realizations}

We now use the explicit descriptions of the admissible subgroups from Section~\ref{subsec:classifying-admissible-subgroups} to show that every admissible subgroup is realized as the image of some pair of elliptic curves over $\Q$. 
 
 First we need a lemma. Set $p^\ast = \chi(-1)p$, so $\Q(\sqrt{p^\ast})$ is the unique quadratic subfield of the cyclotomic extension $\Q(\zeta_p)$. For a squarefree integer $d$, let $\chi_d \colon G_\Q \to \{\pm 1\}$ be the quadratic character associated to $\Q(\sqrt{d})/\Q.$

 \begin{lemma}\label{lem:for-realizations}
\begin{enumerate} 
\item Let $d\neq 1$ be a nonzero squarefree integer. If $E^{(d)}$ denotes the corresponding quadratic twist of $E$, then $\rhobar_{E^{(d)},p} \simeq \chi_d \otimes \rhobar_{E,p}.$ In particular, $\rhobar_{E^{(d)},p} \not\simeq \rhobar_{E,p}.$
\item If $\rhobar_{E,p}$ is surjective, then the unique quadratic subfield of the $p$-division field $\Q(E[p])$ is $\Q(\sqrt{p^\ast})$, and $\chi_{p^\ast}=\chi \circ \det \circ \rhobar_{E,p}.$
\end{enumerate}
 \end{lemma}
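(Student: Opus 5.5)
For part (1), I will work directly with the twist. Write $E\colon y^2=x^3+ax+b$ and $E^{(d)}\colon dy^2=x^3+ax+b$. The map $\psi(x,y)=(x,y/\sqrt d)$ is an isomorphism $E\to E^{(d)}$ defined over $\Q(\sqrt d)$. For $\sigma\in\GQ$ and $P\in E[p]$ I will check that ${}^\sigma\psi(P)=\chi_d(\sigma)\psi({}^\sigma P)$, using $\sigma(\sqrt d)=\chi_d(\sigma)\sqrt d$ and $-(x,y)=(x,-y)$. Transporting a basis of $E[p]$ through $\psi$ then gives $\rhobar_{E^{(d)},p}=\chi_d\otimes\rhobar_{E,p}$.

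For the non-isomorphism in part (1), I will pick $\sigma$ with $\chi_d(\sigma)=-1$ and $\tr\rhobar_{E,p}(\sigma)\neq 0$. If the two representations were isomorphic, traces would agree, so $\tr\rhobar_{E,p}(\sigma)=-\tr\rhobar_{E,p}(\sigma)$, forcing the trace to vanish since $p$ is odd. To find such a $\sigma$, I use the surjectivity standing assumption of this section. The fixed field $\Q(\sqrt d)$ either is linearly disjoint from $K=\Q(E[p])$, or lies in $K$.
\begin{itemize}
\item If it is disjoint, any $g\in G$ with nonzero trace can be lifted to some $\sigma$ with $\chi_d(\sigma)=-1$.
\item If it lies in $K$, then by part (2) $\chi_d=\chi\circ\det\circ\rhobar_{E,p}$, and the witness $g=\dm{\epsilon&0\\0&1}$ (with $\epsilon\neq-1$ a nonsquare) from the proof of Lemma~\ref{lem:det-preserving} works.
\end{itemize}
Alternatively, $\rhobar\simeq\chi_d\otimes\rhobar$ would give $M$ with $M\rhobar(\sigma)M^{-1}=\chi_d(\sigma)\rhobar(\sigma)$. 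This would realize $r_\chi$, or a nontrivial sign twist, as an inner automorphism, contradicting Lemma~\ref{lem:det-preserving} or Lemma~\ref{lem:center-of-S}. The one case to watch is $d=p^\ast$ together with the requirement $\epsilon\neq-1$, but a nonsquare $\epsilon\neq -1$ exists for $p\ge5$.

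For part (2), quadratic subfields of $K$ correspond to index-$2$ subgroups of $G=\GL_2(\F_p)$. Every such subgroup contains $[G,G]=S$, so these subgroups correspond to index-$2$ subgroups of $G/S\cong\F_p^\times$ via $\det$. Since $\F_p^\times$ is cyclic, there is exactly one, namely $\det^{-1}((\F_p^\times)^2)$, which is the kernel of $\chi\circ\det$. Hence $K$ has a unique quadratic subfield, and its character is $\chi\circ\det\circ\rhobar_{E,p}=\chi\circ\bar\chi_{\mathrm{cyc}}$. This is the quadratic character cutting out the unique quadratic subfield of $\Q(\zeta_p)$, which is classically $\Q(\sqrt{p^\ast})$ by the Gauss sum computation $(\sum\chi(a)\zeta_p^a)^2=p^\ast$. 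This gives $\chi_{p^\ast}=\chi\circ\det\circ\rhobar_{E,p}$.

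I expect the main obstacle to be the sign bookkeeping in part (1): verifying ${}^\sigma\psi=\chi_d(\sigma)\psi$ cleanly, and producing the trace-nonzero witness in the case $\Q(\sqrt d)\subset K$. The rest is formal.
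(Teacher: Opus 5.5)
Your proposal is correct and follows essentially the paper's route. You use the same twist formula, which you verify explicitly via $\psi(x,y)=(x,y/\sqrt d)$ where the paper just calls it standard. You also use the same dichotomy on whether $\chi_d$ factors through $\rhobar_{E,p}$ (equivalently, whether $\Q(\sqrt d)\subseteq\Q(E[p])$), and you rule out the factoring case $\chi_d=\chi_{p^\ast}$ with exactly the trace witness $\left(\begin{smallmatrix}\epsilon & 0\\ 0 & 1\end{smallmatrix}\right)$ that proves $r_\chi\notin\Inn(G)$.

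The only real variation is in part (2). You derive uniqueness of the quadratic subfield from $[G,G]=S$, and you identify its character through $\det\circ\rhobar_{E,p}=\bar\chi_{\mathrm{cyc}}$ and the Gauss sum. The paper instead cites the uniqueness and checks $\chi_{p^\ast}(\Frob_\ell)=\left(\frac{\ell}{p}\right)=\chi(\det\rhobar_{E,p}(\Frob_\ell))$ on Frobenius elements, then invokes Chebotarev.
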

 \begin{proof}
The fact that $\rhobar_{E^{(d)},p} \simeq \chi_d \otimes \rhobar_{E,p}$ is standard. If $\chi_d$ does not factor through $\rhobar_{E,p}$, then it immediately follows that $\rhobar_{E^{(d)},p} \not\simeq \rhobar_{E,p}.$ Otherwise, we must have $\chi_d=\chi_{p^\ast}$, and by Lemma~\ref{lem:det-preserving} $r_\chi \not\in\Inn(G)$, so once again we may conclude $\rhobar_{E^{(d)},p} \not\simeq \rhobar_{E,p}.$

The fact that $\Q(\sqrt{p^\ast})$ is the unique quadratic subfield of $\Q(E[p])$ is also standard, see e.g. \cite[Figure 5.3]{Ade01}. The final claim now follows from the Chebotarev density theorem and the observation that 
\[
\chi_{p^\ast}(\mathrm{Frob_\ell)}=\left( \frac{\ell}{p} \right) = \chi(\ell) = \chi(\det \rhobar_{E,p}(\mathrm{Frob_\ell)).}
\]
 \end{proof}

We may now prove the following characterization of explicit realizations of each of the admissible subgroups. See also Table~\ref{table:3-p-13} in Appendix~\ref{appendix}.

\begin{theorem}\label{thm:explicit-realizations}
Let $E_1,E_2$ be elliptic curves over $\Q$ with surjective mod $p$ Galois representations $\rhobar_{E_i,p}$, where $p \geq 5$. Let $H$ be the image of the product representation $\rhobar_{E_1,p} \times \rhobar_{E_2,p}$. Then $H$ is conjugate to:
\begin{enumerate}
\item $\Gamma_\mathrm{id}$ if and only if $\rhobar_{E_1,p} \simeq \rhobar_{E_2,p}$. 
\item $H_\pm$ if and only if $\rhobar_{E_2,p} \simeq \chi_{d} \otimes \rhobar_{E_1,p}$ for some squarefree $d \notin\{1,p^\ast\}$. Thus pairs of quadratic twists $(E,E^{(d)})$ give examples of $H_\pm$.
\item $\Gamma_\chi$ if and only if $\rhobar_{E_2,p} \simeq \chi_{p^\ast} \otimes \rhobar_{E_1,p}$.  Thus pairs of quadratic twists $(E,E^{(p^\ast)})$ give examples of $\Gamma_\chi$.
\item $\Delta$ in every other case.
\end{enumerate}
\end{theorem}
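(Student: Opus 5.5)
By Theorem~\ref{thm:class}, $H$ is conjugate to exactly one of $\Gamma_{\mathrm{id}},\Gamma_\chi,H_\pm,\Delta$. It therefore suffices to prove the forward implication in each of (1)--(3): if $H$ is conjugate to the listed group, then the stated isomorphism holds. I will also show that the three conditions on the right-hand sides are mutually exclusive. The converses then follow: if, say, $\rhobar_{E_2,p}\simeq\chi_d\otimes\rhobar_{E_1,p}$ with $d\notin\{1,p^\ast\}$, then $H$ must be one of the four groups. Each of the other three forces a different, incompatible relation, so $H$ must be $H_\pm$. Case (4) is then whatever remains.

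\textbf{Forward directions.} Conjugating $H$ by $(a,b)\in G\times G$ amounts to changing bases of $E_1[p]$ and $E_2[p]$, so we may assume $H$ equals the model group.
\begin{itemize}
\item If $H=\Gamma_{\mathrm{id}}$, then $\rhobar_{E_2,p}(\sigma)=\rhobar_{E_1,p}(\sigma)$ for all $\sigma$, giving (1).
\item If $H=\Gamma_\chi$, then $\rhobar_{E_2,p}(\sigma)=\chi(\det\rhobar_{E_1,p}(\sigma))\rhobar_{E_1,p}(\sigma)$. By Lemma~\ref{lem:for-realizations}(2), $\chi\circ\det\circ\rhobar_{E_1,p}=\chi_{p^\ast}$, giving (3).
\item If $H=H_\pm$, define $\epsilon(\sigma)\in\{\pm1\}$ by $\rhobar_{E_2,p}(\sigma)=\epsilon(\sigma)\rhobar_{E_1,p}(\sigma)$. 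This is well defined since $-I\neq I$. Multiplicativity of both representations makes $\epsilon$ a continuous homomorphism $\GQ\to\{\pm1\}$. So $\epsilon=\chi_d$ for some squarefree $d$.
\end{itemize}
In the $H_\pm$ case it remains to exclude $d=1$ and $d=p^\ast$. If $d=1$, then $H\subseteq\Gamma_{\mathrm{id}}$, which has the wrong order. If $d=p^\ast$, then $H=\Gamma_\chi$, again of the wrong order, since $|H_\pm|=2|G|$.

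\textbf{Mutual exclusivity.} Let $\rho=\rhobar_{E_1,p}$.
\begin{itemize}
\item $\chi_d\otimes\rho\not\simeq\rho$ for $d\neq1$. Suppose $M$ intertwines them. Then $M$ conjugates $\rho(\sigma)$ to $\chi_d(\sigma)\rho(\sigma)$, so by surjectivity of $\rho$ the map $g\mapsto\pm g$ agrees with conjugation by $M$. Restricting to $S$, or to the kernel of $\chi_d$ pushed into $G$, and using Lemma~\ref{lem:center-of-S}, forces either $\chi_d$ trivial or a contradiction exactly as in Lemma~\ref{lem:det-preserving}. This is essentially Lemma~\ref{lem:for-realizations}(1), but applied to representations rather than curves.
\item $\chi_d\otimes\rho\not\simeq\chi_{d'}\otimes\rho$ for $d\neq d'$. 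Twist by $\chi_d$ and reduce to the previous point with $\chi_{dd'}$ (squarefree part).
\end{itemize}

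The twist statements about curves follow from $\rhobar_{E^{(d)},p}\simeq\chi_d\otimes\rhobar_{E,p}$ together with the observation that $E^{(d)}$ still has surjective image. Indeed, the image of $E^{(d)}$ contains $\pm$ the image of $E$ modulo scalars, hence contains $S=[G,G]$, and its determinant is still cyclotomic, so it is all of $G$.

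\textbf{Main obstacle.} The delicate step is the exclusivity argument when $\chi_d$ does not factor through $\rho$. In that case the conjugation identity only holds on $\GQ$, not on all of $G$. I would handle this via the image of the product $\rho\times\chi_d$. By Goursat applied to $G\times\{\pm1\}$, together with the fact that $G$ has $\chi\circ\det$ as its unique quadratic character, this image is all of $G\times\{\pm1\}$. Then $M$ must satisfy $MgM^{-1}=-g$ for all $g\in G$, which fails at $g=I$.
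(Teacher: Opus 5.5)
\textbf{Gap: the case $H\sim\Delta$ is never excluded.} Your forward implications are correct, and so is your exclusivity argument. Your ``main obstacle'' is also milder than you think. If $M$ intertwines $\rhobar_{E_1,p}$ and $\chi_d\otimes\rhobar_{E_1,p}$, then $\chi_d(\sigma)I=M\rhobar_{E_1,p}(\sigma)M^{-1}\rhobar_{E_1,p}(\sigma)^{-1}$ depends only on $\rhobar_{E_1,p}(\sigma)$. So $\chi_d$ automatically factors through $\rhobar_{E_1,p}$, and only the case $r_\chi\notin\Inn(G)$ remains.

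The problem is the step ``the converses then follow.'' You know that $H\sim\Gamma_{\mathrm{id}}$, $H\sim H_\pm$, $H\sim\Gamma_\chi$ force the right-hand conditions of (1), (2), (3) respectively, and that these conditions are pairwise incompatible. That does not let you conclude that, say, condition (2) forces $H\sim H_\pm$. You have ruled out $\Gamma_{\mathrm{id}}$ and $\Gamma_\chi$, but nothing you prove rules out $H\sim\Delta$. Your sentence ``each of the other three forces a different, incompatible relation'' is false for $\Delta$: in your setup $\Delta$ forces no relation at all. The claim that $H\sim\Delta$ is incompatible with all three conditions is exactly the content of (4) together with the converses, so it cannot be assumed. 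The same hole affects the converses of (1) and (3).

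\textbf{The fix.} You need the easy containment that the paper states explicitly. Suppose $\rhobar_{E_2,p}=M(\chi_d\otimes\rhobar_{E_1,p})M^{-1}$ for some quadratic or trivial character $\chi_d$. Conjugating by $(I,M^{-1})$ carries each $(\rhobar_{E_1,p}(\sigma),\rhobar_{E_2,p}(\sigma))$ to $(\rhobar_{E_1,p}(\sigma),\chi_d(\sigma)\rhobar_{E_1,p}(\sigma))\in H_\pm$. Hence $|H|\le 2|G|<|G|^2/(p-1)=|\Delta|$, and $H\not\sim\Delta$. With this added, your case analysis closes.

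\textbf{Comparison with the paper.} The paper's route is a bit more economical. It combines this containment with Goursat applied to $\sigma\mapsto(\rhobar_{E_1,p}(\sigma),\chi_d(\sigma))$ in $G\times\{\pm1\}\cong H_\pm$ to compute the image outright. The image is all of $H_\pm$ if $d\neq p^\ast$, and the graph $\Gamma_\chi$ if $d=p^\ast$. So each converse comes directly, and exclusivity follows from the uniqueness in Theorem~\ref{thm:class}. You use the same Goursat computation only to prove non-isomorphism of twists, and then have to reassemble the cases; that reassembly is where $\Delta$ slipped out. Your check that $E^{(d)}$ again has surjective mod $p$ image is a worthwhile addition, since the paper leaves it implicit in the ``pairs of quadratic twists'' claims.
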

\begin{proof}
The claim about $\Gamma_\mathrm{id}$ is immediate. 

If $H$ is not conjugate to $\Gamma_\mathrm{id}$ or $\Delta$, then by Theorem~\ref{thm:class} it is conjugate to either $H_{\pm}$ or $\Gamma_\chi$, noting that $\Gamma_{\chi} \leq H_{\pm}$. In particular, $H$ is conjugate to a subgroup of $H_\pm.$ Then there exists $a,b \in G$ such that for every $\sigma \in G_{\Q}$ we have
\[
\rhobar_{E_1,p}(\sigma)=ag_\sigma a^{-1} \quad \text{and} \quad \rhobar_{E_2,p}(\sigma)=\epsilon(\sigma)bg_\sigma b^{-1}
\]
for $g_\sigma \in G$ and $\epsilon(\sigma) \in \{ \pm 1\}$. Consider the matrix $M=ba^{-1}$; then the displayed equations imply
\begin{equation}\label{eq:h-pm-conj}
\rhobar_{E_2,p}(\sigma)=\epsilon(\sigma)M\rhobar_{E_1,p}(\sigma)M^{-1}.
\end{equation}
Since scalars commute and the $\rhobar_{E_i,p}$ are homomorphisms, we see that $\epsilon \colon G_\Q \to \{\pm 1\}$ is also a homomorphism, and more specifically a quadratic character. If it were trivial, then \eqref{eq:h-pm-conj} would imply $\rhobar_{E_1,p}\simeq \rhobar_{E_2,p}$, but then we must have $H=\Gamma_\mathrm{id}$. So instead we have $\epsilon=\chi_d$ for some squarefree integer $d$. Conversely, given any such $\chi_d$, if $\rhobar_{E_2,p} \simeq \chi_d \otimes \rhobar_{E_1,p}$ then up to conjugation we have 
\[
\rhobar_{E_2,p}(\sigma) = \chi_d(\sigma) \rhobar_{E_1,p}(\sigma) \in \{ \pm \rhobar_{E_1,p}(\sigma) \} \quad \text{for all } \sigma \in G_\Q,
\]
hence $H \subset H_\pm.$

It thus remains to distinguish between the cases in (2) and (3) of the theorem statement. To that end, define a homomorphism $\phi \colon G_\Q \to H_\pm \simeq G \times \{\pm 1\}$ by $\phi(\sigma)=(\rhobar_{E_1,p}(\sigma),\chi_d(\sigma)),$ and consider its image. By Goursat's lemma, this corresponds to a choice of normal subgroups $N_1 \triangleleft G$ and $N_2 \triangleleft \{\pm 1\}$ such that $N_1 \times N_2 \leq H$, along with an isomorphism $G/N_1 \xrightarrow{\sim} \{\pm 1 \} /N_2.$ But $G$ has a unique index $2$ subgroup, namely $S$, and the unique surjection $G \to \{\pm 1\}$ is $\chi \circ \det$, so by Lemma~\ref{lem:for-realizations}, if $N_1=S$ then $\chi_d=\chi_{p^\ast}$. In this case we have $\rhobar_{E_2,p}(\sigma) = \chi(\det \rhobar_{E_1,p}(\sigma))\rhobar_{E_1,p}(\sigma)$ for all $\sigma \in G_\Q$, hence $H$ is conjugate to $\Gamma_\chi$. Otherwise, we must have $N_1=G$, $\chi_d$ is a nontrivial quadratic character for $d \neq p^{\ast}$, and $H$ is conjugate to $H_\pm$.
\end{proof}

\begin{remark}
The group $\Gamma_\mathrm{id}$ is realized by any pair $(E_1,E_2)$ possessing a degree prime-to-$p$ isogeny. The Frey-Mazur Conjecture asserts that these are the only pairs realizing $\Gamma_\mathrm{id}$ when $p \gg 0$. On the other hand, there are parameterized families of non-isogenous primes for small primes; see e.g. \cite{rubin-silverberg}.
\end{remark}

\section{The anomalous case $p=3$}\label{sec:main-results}

Prior to obtaining our general results in Section~\ref{sec:general-p}, we performed explicit computations for primes $p \leq 17$ using the code found in \cite{code}.  The primary scripts are implemented in Magma \cite{Magma} and SageMath \cite{sage}; the SageMath implementation also makes use of PARI \cite{PARI2} and GAP \cite{GAP4}.  While these computational results are subsumed by the results in Section~\ref{sec:general-p} for all $p \geq 5$, the case when $p=3$ is slightly anomalous, so we treat it individually in this section. 

In particular, Proposition~\ref{prop:normal-subs-are-in-center-or-big} fails when $p=3$, as noted in Remark~\ref{rem:prop-fails-p3}. In this case, there is one additional admissible subgroup, furnished via Goursat's lemma by the exceptional normal subgroup $Q_8 \leq \GL_2(\F_3)$, which is the unique normal subgroup of order 8. We call this admissible subgroup $H_Q$. Every other admissible subgroup is described by the results in Section~\ref{sec:general-p}; that is, the sole difference between the results for $p=3$ and $p \geq 5$ is that $H_Q$ exists if and only if $p=3.$ 

 We apply the methods described in Section~\ref{sec:background} for $p = 3$.  Let $E_1$ and $E_2$ be elliptic curves defined over $\Q$. Further, suppose that $\rhobar_{E_1,3}$ and $\rhobar_{E_2,3}$ are surjective. If $\rhobar_{E_1,3} \not\simeq \rhobar_{E_2,3}$, then the image of the product representation is conjugate to one of $\Gamma_\chi, H_\pm, H_Q,$ and $\Delta.$ All of these groups are described explicitly in Section~\ref{subsec:classifying-admissible-subgroups} except for $H_Q,$ which has size $2^7 \cdot 3$ and is generated by the elements
 \[
 (A, A),\quad (B, B^\top)
 \]
 where
 \begin{math}
     A =
     \left(
     \begin{smallmatrix}
         1 & 2\\
         0 & 1
     \end{smallmatrix}
     \right)
 \end{math}
 and
  \begin{math}
     B =
     \left(
     \begin{smallmatrix}
         1 & 1\\
         2 & 1
     \end{smallmatrix}
     \right)
 \end{math}.
 
Using our code from \cite{code}, we computed the witness ratios for each of the admissible subgroups in the $p=3$ case, giving the following theorem. (We note that, for the subgroups $\Gamma_\chi, H_\pm$, and $\Delta$, our computations yield the same witness ratios as the general formulas from Section~\ref{sec:general-p}, so once again, the only new information here pertains to $H_Q$.)

\begin{theorem}\label{thm:main-p-3}
Suppose $E_1$ and $E_2$ are elliptic curves defined over $\Q$. Suppose further that $\rhobar_{E_1,3}$ and $\rhobar_{E_2,3}$ are surjective. If $\rhobar_{E_1,3} \not\simeq \rhobar_{E_2,3}$, then the image $H$ of the product representation is conjugate to one of $\Gamma_\chi, H_\pm, H_Q,$ and $\Delta.$ Furthermore, we compute
\[
W(\bullet)=\begin{cases} 
    \frac{1}{4}, & \bullet = \Gamma_\chi \\ 
    \frac{5}{16}, & \bullet = H_\pm \\
    \frac{35}{64}, & \bullet = H_Q \\
    \frac{41}{64}, & \bullet=\Delta \\
    \end{cases}.
\]
As a consequence, we have
\[
\lim_{X \to \infty} \frac{\#\{\textrm{good primes } \ell < X \ |  \ a_\ell(E_1) \not\equiv a_\ell(E_2) \mod 3 \}}{\#\{\textrm{good primes } \ell < X\}} = W(H).
\]
\end{theorem}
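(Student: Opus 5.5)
The plan is to rerun the argument of Section~\ref{sec:general-p} for $G=\GL_2(\F_3)$, $|G|=48$. The only new input is the list of normal subgroups, which replaces Proposition~\ref{prop:normal-subs-are-in-center-or-big}. The normal subgroups of $G$ are
\[
\{I\},\quad Z=\{\pm I\},\quad Q_8,\quad S=\SL_2(\F_3),\quad G,
\]
with quotients $G$, $G/Z\cong S_4$, $G/Q_8\cong S_3$, $C_2$ and the trivial group. I would verify this list by hand from the class equation of $G$; the class sizes are $1,1,12,8,8,6,6,6$.

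By Lemma~\ref{lem:goursat} the image $H$ corresponds to a pair $(N_1,N_2)$ with $|N_1|=|N_2|$ and $G/N_1\cong G/N_2$. Since each order in the list occurs once, $N_1=N_2=N$. I would then treat the cases as follows.
\begin{itemize}
\item If $N\supseteq S$, the proof of Lemma~\ref{lem:possibility-delta} carries over verbatim and gives $H=\Delta$.
\item If $N=\{I\}$ or $N=Z$, the argument of Lemma~\ref{lem:N-size-1-or-2} still shows that only these two central cases occur. Lemma~\ref{lem:det-preserving} used $\Aut(S)\cong\PGL_2(\F_p)$, which relied on $p\ge5$, so I would replace it by the facts $\Aut(S_4)=\Inn(S_4)$ and $\Out(\GL_2(\F_3))\cong C_2$. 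The latter is generated by $r_\chi$; I would check this by noting that any automorphism permutes the classes and must fix the determinant data. This recovers exactly $\Gamma_{\mathrm{id}}$, $\Gamma_\chi$ and $H_\pm$.
\item If $N=Q_8$, the quotient is $S_3$, and $\Aut(S_3)=\Inn(S_3)$. Every inner automorphism of $S_3$ lifts to conjugation in $G$. So after conjugating by $(1,c)$ we get the single group
\[
H_Q=\{(g,h): gQ_8=hQ_8\},
\]
of order $48\cdot 8=384=2^7\cdot 3$. The determinant condition holds automatically, because $\det$ factors through $G/Q_8\to G/S\cong\{\pm1\}$. I would then match this group with the stated generators $(A,A)$ and $(B,B^\top)$, for instance by checking that $B^\top B^{-1}\in Q_8$.
\end{itemize}
Non-conjugacy of the five groups follows from their orders, together with the argument already given for $\Gamma_{\mathrm{id}}$ versus $\Gamma_\chi$.

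For the witness ratios, the formulas of Theorem~\ref{thm:witness-ratios-general-p} only used the group descriptions and Lemma~\ref{lem:Ntd}, neither of which needed $p\ge5$. Substituting $p=3$ and $\chi(-1)=-1$ gives
\[
W(\Gamma_\chi)=\tfrac14,\qquad W(H_\pm)=\tfrac{5}{16},\qquad W(\Delta)=1-\tfrac{23}{64}=\tfrac{41}{64}.
\]
For $H_Q$ I would compute
\[
W(H_Q)=\frac{1}{384}\sum_{g\in G}\#\{q\in Q_8:\tr(gq)\ne\tr(g)\}.
\]
The inner count is constant on conjugacy classes. So it suffices to compute it on one representative of each of the $8$ classes of $G$ and weight by class size. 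This amounts to writing out the six cosets of $Q_8$ (two in $S$, four in $G\setminus S$) and tabulating traces, and the target total is $35\cdot 6=210$.

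The last assertion of the theorem then follows from the Chebotarev density theorem exactly as in Theorem~\ref{thm:main-p-at-least-5}. I expect the main obstacle to be the automorphism step in the two central cases: it requires checking by hand that every determinant-preserving automorphism of $\GL_2(\F_3)$ is inner or inner composed with $r_\chi$. The $H_Q$ trace tabulation is tedious but mechanical, and can be cross-checked against the code in \cite{code}.
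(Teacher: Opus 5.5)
Your proposal is correct, but it takes a genuinely different route from the paper. The paper proves this theorem by machine. Magma/Sage scripts enumerate the admissible subgroups for $p=3$ and their conjugacy classes, and $W(H)$ is read off from class representatives and sizes. The only conceptual input offered is the remark that $G/Q_8\simeq S_3$ and $\Aut(S_3)=\Inn(S_3)$ account for the single new group.

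You instead redo Section~\ref{sec:general-p} by hand:
\begin{itemize}
\item The normal subgroups $\{I\},Z,Q_8,S,G$ have distinct orders, which forces $N_1=N_2$.
\item The cases $N\supseteq S$, $N=Z$ (via $\Aut(S_4)=\Inn(S_4)$) and $N=Q_8$ (via $\Aut(S_3)=\Inn(S_3)$) go through as you describe.
\item The formulas of Theorem~\ref{thm:witness-ratios-general-p} use only that $p$ is odd, so they specialize to $\frac{1}{4},\frac{5}{16},\frac{41}{64}$.
\item Your class-by-class count for $H_Q$ works. Set $f(g)=\#\{q\in Q_8:\tr(gq)\neq\tr(g)\}$. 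Then $f=7$ on $\pm I$; $f=4$ on the two classes of type $\mathcal{N}_{\pm1}$ and on $\mathcal{D}_{1,-1}$; $f=2$ on the elliptic class with $(\tr,\det)=(0,1)$; and $f=6$ on the two elliptic classes of determinant $-1$. This gives $7+7+32+32+48+12+36+36=210$.
\end{itemize}
Your route buys a human-checkable explanation of why exactly one extra group appears and why the general-$p$ formulas persist at $p=3$. The paper's route buys brevity and machine-checked bookkeeping, at the cost of resting on code.

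A few small repairs are needed.
\begin{itemize}
\item Since $[S:Q_8]=3$, there are three cosets of $Q_8$ in $S$ and three in $G\setminus S$, not two and four.
\item The step you flag as the main obstacle is short. $Z$ is characteristic, and $\Inn(G)$ already surjects onto $\Aut(G/Z)=\Aut(S_4)=\Inn(S_4)$. The kernel of $\Aut(G)\to\Aut(G/Z)$ consists of the maps $g\mapsto\lambda(g)g$ with $\lambda\in\mathrm{Hom}(G/S,\{\pm1\})$, i.e.\ it is $\{\mathrm{id},r_\chi\}$.
\item To see $r_\chi\notin\Inn(G)$ at $p=3$, you cannot reuse the paper's witness $\dm{\epsilon&0\\0&1}$ with $\epsilon\neq-1$, because no such $\epsilon$ exists in $\F_3$. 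Use $\dm{0&1\\1&1}$ instead (determinant $-1$, trace $1$), or simply note $W(\Gamma_{\mathrm{id}})=0\neq\frac{1}{4}=W(\Gamma_\chi)$.
\item Checking $B^\top B^{-1}\in Q_8$ only gives $\langle (A,A),(B,B^\top)\rangle\subseteq H_Q$. For equality, observe first that both projections are onto: $A$ has order $3$, and $B$ and $B^\top$ have order $8$ and determinant $-1$. Then note that $(AB,AB^\top)$ has traces $(0,1)$. This is impossible in any conjugate of $\Gamma_{\mathrm{id}},\Gamma_\chi,H_\pm$, where always $\tr h=\pm\tr g$.
\end{itemize}
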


\begin{proof}
As described before the theorem, we originally used a computer algebra system to determine the possible groups which can arise as $\mathrm{Im}(\rhobar)$. With these in hand, for each such group $H$ we can again use our scripts \cite{code} to enumerate the conjugacy classes, making note of a representative element and the size for each class. It is then a straightforward matter to compute $W(H)$. The final statement follows immediately from the Chebotarev Density Theorem. 
\end{proof} 


For the remainder of this section, we consider the exceptional admissible group $H_Q$. As confirmed by our explicit computations, the extra subgroup $Q_8$ gives rise to precisely one more admissible group, since $G/Q_8 \simeq S_3$ and $\Aut(S_3)=\Inn(S_3)$. Thus, by Goursat's lemma we must have the description
\[
H_Q = \{ (g_1,g_2) \in \GL_2(\F_3) \times \GL_2(\F_3) \ | \ g_1 g_2^{-1} \in Q_8\}.
\]
Since $\GL_2(\F_3)$ has order $48$ and $Q_8$ is its unique normal subgroup of order $8$, we have $|\GL_2(\F_3)/Q_8|=6$. One checks that $G/Q_8$ is nonabelian, hence $\GL_2(\F_3) / Q_8 \simeq S_3$. For $i \in \{1,2\}$ let us write $K_{E_i}=\Q(E_i[3])^{Q_8}$ for the subfield of the $3$-division field of $E_i$ fixed by $Q_8$. By the Galois correspondence, this is the unique $S_3$-extension of $\Q$ inside of $\Q(E_i[3])$. In light of Theorem~\ref{thm:explicit-realizations}, we see that the image of the product representation is conjugate to $H_Q$ precisely when $K_{E_1}=K_{E_2}$ but $\rhobar_{E_2} \not\simeq \chi_d \otimes \rhobar_{E_1}$ for any character $\chi_d$ of order 1 or 2. 

For an elliptic curve $E$, let $\Delta_E$ denote its discriminant. The criterion derived above can be translated into the following theorem in terms of the discriminants of the elliptic curves.

\begin{theorem} \label{thm:3-delta} Let $E_1$ and $E_2$ be elliptic curves with surjective mod $3$ Galois representations. The pair $(E_1, E_2)$ realizes $H_Q$ if and only if their Galois representations are not (trivial or quadratic) twists of one another and $\Delta_{E_1} \Delta_{E_2}^{\pm1} \in (\Q^\times)^3$.
\end{theorem}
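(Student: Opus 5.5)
The plan has three parts. First, use the discussion preceding the statement to reduce everything to a comparison of the $S_3$-fields $K_{E_1}$ and $K_{E_2}$. Second, identify $K_E$ explicitly as the splitting field of $x^3-\Delta_E$. Third, decide when two such pure cubic splitting fields coincide via Kummer theory.

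\textbf{Step 1 (reduction).} By Goursat's lemma and the classification of normal subgroups of $\GL_2(\F_3)$ (namely $1$, $\{\pm I\}$, $Q_8$, $\SL_2(\F_3)$, $G$), the image $H$ is contained in $H_Q$ exactly when the two composites $G_\Q\to G\to G/Q_8\simeq S_3$ have the same kernel. Since $\Aut(S_3)=\Inn(S_3)$, we may conjugate so that they agree. In field terms this says $K_{E_1}=K_{E_2}$.

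Among the admissible groups, those contained in $H_Q$ up to conjugacy are $\Gamma_{\mathrm{id}}$, $\Gamma_\chi$, $H_\pm$ (since $\pm I\in Q_8$) and $H_Q$ itself. Moreover $\Delta\not\subseteq H_Q$. The first three are precisely the trivial or quadratic twist cases, by the argument of Theorem~\ref{thm:explicit-realizations}, which carries over verbatim for $p=3$ because it only uses that $H\subseteq H_\pm$. So the theorem reduces to showing
\[
K_{E_1}=K_{E_2}\iff \Delta_{E_1}\Delta_{E_2}^{\pm1}\in(\Q^\times)^3 .
\]

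\textbf{Step 2 (identify $K_E$).} I claim $K_E$ is the splitting field of $x^3-\Delta_E$, i.e.\ $K_E=\Q(\sqrt{-3},\sqrt[3]{\Delta_E})$. The quadratic subfield of $K_E$ is $\Q(\sqrt{-3})$ by Lemma~\ref{lem:for-realizations}, since $3^\ast=-3$. The standard fact that $\Q(\sqrt[3]{\Delta_E})\subseteq \Q(E[3])$ can be proved using the $3$-division polynomial. More cleanly, $\Delta_E$ is, up to a $12$th power (hence a cube), a modular-form value. Concretely, $\sqrt[3]{\Delta}$ is a coordinate function on $X(3)$-type data: for a Weierstrass model with a $3$-torsion point, the values of $\Delta$ factor as cubes over $\Q(E[3])$.

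When $\rhobar_{E,3}$ is surjective, $\Delta_E$ is not a cube. Otherwise $\Q(\sqrt[3]{\Delta_E})$ would be trivial, and the image of $\rhobar_{E,3}$ would land in the normalizer of a Cartan or a similar proper subgroup. Granting this, $\Q(\sqrt{-3},\sqrt[3]{\Delta_E})$ is an $S_3$-extension inside $\Q(E[3])$. Since $G$ has a unique normal subgroup with quotient $S_3$, this field equals $K_E$.

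\textbf{Step 3 (Kummer).} Over $F=\Q(\sqrt{-3})$, which contains $\mu_3$, Kummer theory gives
\[
F(\sqrt[3]{a})=F(\sqrt[3]{b})\iff \langle a\rangle=\langle b\rangle \text{ in } F^\times/F^{\times3}.
\]
For non-cubes $a,b$ this means $ab^{\pm1}\in F^{\times3}$. Finally, $\Q^\times\cap F^{\times 3}=\Q^{\times 3}$, because $[F:\Q]=2$ is prime to $3$ (take norms: if $c=\beta^3$ then $c^2=N(\beta)^3$, so $c$ is a cube). This yields the criterion.

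The main obstacle is Step 2: rigorously justifying $\Q(\sqrt[3]{\Delta_E})\subseteq\Q(E[3])$. I would first try the explicit route. Take the roots $x_1,\dots,x_4$ of the $3$-division polynomial $\psi_3$; the $x$-coordinates of $3$-torsion points correspond to the four lines in $E[3]$, so $G$ acts on them through $\PGL_2(\F_3)\simeq S_4$. The cubic resolvent of $\psi_3$ then corresponds to the $S_3$-quotient, and one computes that this resolvent is $x^3-\Delta_E$ up to rescaling by cubes. This is a classical computation (e.g.\ Serre's remarks on $3$-division fields), and it settles the identification without any group-theoretic subtlety.
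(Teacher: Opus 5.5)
Your proposal is correct and follows essentially the same route as the paper. You reduce to the equality $K_{E_1}=K_{E_2}$ of the $Q_8$-fixed fields, with the finer images $\Gamma_{\mathrm{id}},\Gamma_\chi,H_\pm$ accounting exactly for the twist cases. You identify $K_E=\Q(\zeta_3,\Delta_E^{1/3})$ via the cubic resolvent of the $3$-division polynomial, which the paper takes from Adelmann. You then conclude by Kummer theory, and your norm argument for $\Q^\times\cap F^{\times 3}=\Q^{\times 3}$ supplies a detail the paper leaves implicit.

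One point to tighten: it is the resolvent computation in your last paragraph that proves both $\sqrt[3]{\Delta_E}\in\Q(E[3])$ and that $\Delta_E$ is not a cube, since the resolvent's splitting field is the $Q_8$-fixed field, of degree $6$ by surjectivity. The modular-form and Cartan remarks in Step 2 presuppose that identification and can be dropped.
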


\begin{proof}
    Let $E \in \{E_1,E_2\}$ and let $K_E \subseteq \Q(E[3])$ denote the fixed
    field of $Q_8$. Since $Q_8$ is the unique normal subgroup of $\GL_2(\F_3)$ of
    index $6$ \cite[Figure 5.1]{Ade01}, and $\rhobar_{E,3}$ is surjective, $K_E$ is the unique Galois subextension of $\Q(E[3])/\Q$ of degree $6$. By \cite[Proposition 5.4.3]{Ade01}, $\Q(\zeta_3,\Delta_E^{1/3})$ is precisely the fixed field of $Q_8$; indeed, the cubic resolvent of the defining polynomial of $\Q(x(E[3]))/\Q$ is $x^3-\Delta_E$ after normalization. In particular we have $[\Q(\zeta_3,\Delta_E^{1/3}):\Q]=6,$ so $\Delta_E$ is not a rational cube, and $K_E=\Q(\zeta_3,\Delta_E^{1/3}).$
    
    By Kummer theory,
    $\Q(\zeta_3,\Delta_{E_1}^{1/3})=\Q(\zeta_3,\Delta_{E_2}^{1/3})$ if and only if
    $\Delta_{E_1}\Delta_{E_2}^{\pm 1}\in(\Q^\times)^3$, that is, if and only if
    $\Delta_{E_1}$ and $\Delta_{E_2}$ generate the same subgroup of
    $\Q^\times/(\Q^\times)^3$.

    It remains to translate this into a statement about images. The image of
    $\rhobar_{E_1}\times\rhobar_{E_2}$ is contained in $H_Q$ if and only if $\rhobar_{E_1}$ and
    $\rhobar_{E_2}$ agree modulo $Q_8$, which by the Galois correspondence holds if and
    only if $K_{E_1}=K_{E_2}$. Since both projections are surjective, such an image
    is conjugate to $H_Q$ unless it is one of the finer images
    $\Gamma_{\mathrm{id}}$, $\Gamma_\chi$, or $H_\pm$, and by Theorem~\ref{thm:explicit-realizations}
    these occur precisely when $\rhobar_2\cong\chi_d\otimes\rhobar_1$ for some
    quadratic (possibly trivial) character $\chi_d$. Excluding this by hypothesis,
    the image is conjugate to $H_Q$ if and only if
    $\Delta_{E_1}\Delta_{E_2}^{\pm 1}\in(\Q^\times)^3$.
\end{proof}

\begin{remark}
    One can check via SageMath or Magma that the example pair $(\href{https://www.lmfdb.org/EllipticCurve/Q/11a2/}{\texttt{11a2}}, \href{https://www.lmfdb.org/EllipticCurve/Q/352d1/}{\texttt{352d1}})$ from Table~\ref{table:3-p-13} satisfies the criteria of Theorem~\ref{thm:3-delta}.
\end{remark}

\appendix

\section{Some explicit examples}\label{appendix}
\begin{table}[h!]
\caption{Subgroups, witness ratios, and explicit examples for $3 \leq p \leq 13$}\label{table:3-p-13}
\begin{tabular}{|c|c|c|c|c|}
\hline
    Prime $p$ & Subgroup & Order & Witness ratios & Explicit example\\
    \hline \hline
    3 & $\Gamma_\chi$ & $2^4 \cdot 3$ & $1/4$ & $(\href{https://www.lmfdb.org/EllipticCurve/Q/11a1/}{\texttt{11a1}}, \href{https://www.lmfdb.org/EllipticCurve/Q/99d2/}{\texttt{99d2}})$\\
     & $H_\pm$ & $2^5 \cdot 3$ & $5/16$ & $(\href{https://www.lmfdb.org/EllipticCurve/Q/11a1/}{\texttt{11a1}}, \href{https://www.lmfdb.org/EllipticCurve/Q/176b2/}{\texttt{176b2}})$ \\
     & $H_Q $ & $2^7 \cdot 3$ & $35/64$ & $(\href{https://www.lmfdb.org/EllipticCurve/Q/11a2/}{\texttt{11a2}}, \href{https://www.lmfdb.org/EllipticCurve/Q/352d1/}{\texttt{352d1}})$ \\
     & $\Delta$ & $2^7 \cdot 3^2$ & $41/64$ & $(\href{https://www.lmfdb.org/EllipticCurve/Q/11a1/}{\texttt{11a1}}, \href{https://www.lmfdb.org/EllipticCurve/Q/17a1/}{\texttt{17a1}})$\\
     \hline
     5 & $\Gamma_\chi$ & $2^5 \cdot 3 \cdot 5$ & $5/12$ & $(\href{https://www.lmfdb.org/EllipticCurve/Q/26b2}{\texttt{26b2}}, \href{https://www.lmfdb.org/EllipticCurve/Q/650f2}{\texttt{650f2}})$\\
     & $H_{\pm}$ & $2^6 \cdot 3 \cdot 5$ & $19/48$ & $(\href{https://www.lmfdb.org/EllipticCurve/Q/26b2}{\texttt{26b2}},\href{https://www.lmfdb.org/EllipticCurve/Q/1872n2}{\texttt{1872n2}})$  \\
     & $\Delta$ & $2^8 \cdot 3^2 \cdot 5^2$ & $457/576$ & $(\href{https://www.lmfdb.org/EllipticCurve/Q/14a3}{\texttt{14a3}},\href{https://www.lmfdb.org/EllipticCurve/Q/15a3}{\texttt{15a3}})$ \\
     \hline 
     7 & $\Gamma_\chi$ & $2^5 \cdot 3^2 \cdot 7$ & $5/12$ & $(\href{https://www.lmfdb.org/EllipticCurve/Q/11a2}{\texttt{11a2}},\href{https://www.lmfdb.org/EllipticCurve/Q/539d3}{\texttt{539d3}})$ \\
     & $H_{\pm}$ & $2^6 \cdot 3^2 \cdot 7$ & $41/96$ & $(\href{https://www.lmfdb.org/EllipticCurve/Q/11a1}{\texttt{11a1}},\href{https://www.lmfdb.org/EllipticCurve/Q/704a3}{\texttt{704a3}})$ \\
     & $\Delta$ & $2^9 \cdot 3^3 \cdot 7^2$ & $1969/2304$ & $(\href{https://www.lmfdb.org/EllipticCurve/Q/11a1}{\texttt{11a1}},\href{https://www.lmfdb.org/EllipticCurve/Q/19a1}{\texttt{19a1}})$\\
     \hline 
     11 & $\Gamma_\chi$ & $2^4 \cdot 3 \cdot 5^2 \cdot 11$ & $9/20$ & (\href{https://www.lmfdb.org/EllipticCurve/Q/11a2}{\texttt{11a2}}, \href{https://www.lmfdb.org/EllipticCurve/Q/121d3}{\texttt{121d3}}) \\
     & $H_\pm$ & $2^5 \cdot 3 \cdot 5^2 \cdot  11$ & $109/240$ & (\href{https://www.lmfdb.org/EllipticCurve/Q/20a4}{\texttt{20a4}}, \href{https://www.lmfdb.org/EllipticCurve/Q/100a3}{\texttt{100a3}}) \\
     & $\Delta$ & $2^7 \cdot 3^2 \cdot 5^3 \cdot 11^2$ & $13081/14400$ & $(\href{https://www.lmfdb.org/EllipticCurve/Q/11a1}{\texttt{11a1}},\href{https://www.lmfdb.org/EllipticCurve/Q/19a1}{\texttt{19a1}})$
     \\
     \hline 
     13& $\Gamma_\chi$ & $2^5 \cdot 3^2 \cdot 7 \cdot 13$ & $13/28$ & $(\href{https://www.lmfdb.org/EllipticCurve/Q/11a1}{\texttt{11a1}},\href{https://www.lmfdb.org/EllipticCurve/Q/1859a2}{\texttt{1859a2}})$\\ 
     & $H_\pm$ & $2^6 \cdot 3^2 \cdot 7 \cdot 13$ & $155/336$ & $(\href{https://www.lmfdb.org/EllipticCurve/Q/11a1}{\texttt{11a1}},\href{https://www.lmfdb.org/EllipticCurve/Q/704a2}{\texttt{704a2}})$ \\
     & $\Delta$ & $2^8 \cdot 3^3 \cdot 7^2 \cdot 13^2$ & $26041/28224$ & $(\href{https://www.lmfdb.org/EllipticCurve/Q/11a1}{\texttt{11a1}},\href{https://www.lmfdb.org/EllipticCurve/Q/19a1}{\texttt{19a1}})$ \\
     \hline 
\end{tabular}
\end{table}

\bibliographystyle{amsalpha}
\bibliography{references}

\end{document}